\documentclass[12pt]{article}
\usepackage[a4paper,margin=1in]{geometry}
\usepackage{amsmath,amssymb,amsthm}
\usepackage{xcolor}
\usepackage{enumitem}
\usepackage{tikz}
\usetikzlibrary{arrows.meta,positioning,calc}
\usepackage{microtype}
\usepackage{hyperref}
\hypersetup{colorlinks=true,linkcolor=black,citecolor=black,urlcolor=blue}
\usepackage{indentfirst}
\newtheorem{theorem}{Theorem}[section]
\newtheorem{problem}{Problem}[section]
\newtheorem{claim}{Claim}
\newtheorem{property}{Property}

\title{Complexity of Arc-Decompositions involving Perfect Matchings and Cycle Factors\thanks{The author's work is supported by National Natural Science Foundation of China (No.12571373)}}
\author{Hangning Liu$^{1}$\footnote{E-mail address: 202511918@mail.sdu.edu.cn.}, J\o rgen Bang-Jensen$^{1,3}$\footnote{E-mail address: jbj@imada.sdu.dk.}, Jin Yan$^{1}$\footnote{E-mail address: yanj@sdu.edu.cn.}, Jia Zhou$^{2}$\footnote{Corresponding author. E-mail address: jiazhou\_99@163.com.}\\[2mm]
\small $^{1}$School of Mathematics, Shandong University, Jinan 250100, China\\
\small $^{2}$School of Mathematics and Statistics, Ningxia University, Yinchuan 750021, China\\
\small $^{3}$Department of Mathematics and Computer Science, University of\\ \small Southern Denmark, Odense DK-5230, Denmark}
\date{}

\begin{document}
\maketitle

\begin{abstract}
For two digraph properties $P_1$ and $P_2$, a $(P_1,P_2)$-arc-decomposition of a digraph $D$ is a partition $A(D)=A_1\mathbin{\dot\cup}A_2$ such that the spanning subdigraphs $D[A_1]$ and $D[A_2]$ have properties $P_1$ and $P_2$, respectively. For example, a (strong,strong)-arc-decomposition of a digraph $D=(V,A)$ is a partitioning $A=A_1\cup{}A_2$ of $A$ so that each of the spanning digraphs $D_i=(V,A_i)$, $i=1,2$ are strongly connected. We prove that it is NP-complete to decide whether a digraph admits an arc-decomposition with properties $(P_1,P_2)$ where $(P_1,P_2)\in \{$(is a perfect matching,  having no odd directed cycle), (perfect matching,  strong), (perfect matching, having an out-branching), (is a cycle factor, having no odd directed cycle)$\}$. These results settle some open problems posed by Bang-Jensen, Bessy, Gon\c{c}alves, and Picasarri-Arrieta [Theoret. Comput. Sci. 928 (2022), 167--182].
\end{abstract}

\vspace{1ex}
{\noindent\small{\bf Keywords: } digraphs; arc-decomposition; NP-complete; perfect matching; cycle factor}
\vspace{1ex}

{\noindent\small{\bf AMS subject classifications.} 05C20, 05C38, 05C70}

\section{Introduction}
Partition and decomposition problems form a classical and extensively studied area of graph theory, where one asks whether the vertices, edges, or arcs of a graph can be divided into parts satisfying prescribed properties. 
Bern\'ath and Kir\'aly~\cite{BernathKiraly2015} systematically studied a broad family of natural packing, covering, and edge-partitioning problems for undirected graphs and determined the complexity. For two digraph properties $Q_1$ and $Q_2$, a $(Q_1,Q_2)$-partition of a digraph $D$ is a vertex-partition $V(D)=V_1\cup V_2$ such that $D[V_i]$ has property $Q_i$ for $i=1,2$. Bang-Jensen and Havet~\cite{BangJensenHavet2016} and Bang-Jensen, Cohen, and Havet~\cite{BangJensenCohenHavet2016} systematically studied such problems and determined the complexity of a large number of vertex-partition problems in digraphs. Related degree-constrained partition problems were further studied in~\cite{BangJensenBessyHavetYeo2018,BangJensenChristiansen2018}. {A large number of other papers studying vertex-partitions exist, see e.g. the references mentioned in introduction in \cite{BangJensenHavet2016}.

The topic of this paper is the natural analogue where we seek a partition the arc set with prescribed properties. For two digraph properties $P_1$ and $P_2$, a {\bf $\mathbf{(P_1,P_2)}$-arc-decomposition} of a digraph $D$ is an arc-decomposition $A(D)=A_1\mathbin{\cup}A_2$ such that the spanning subdigraphs $D[A_1]$ and $D[A_2]$ have properties $P_1$ and $P_2$, respectively. 
 As a classical result, Edmonds' theorem on arc-disjoint branchings~\cite{Edmonds1973} combined with the algorithmic proof by Lov\'a{}sz \cite{lovaszJCTB21} yields polynomial-time algorithms for packing out-branchings with prescribed roots. A digraph $T$ is an \textbf{out-tree} (an \textbf{in-tree}) if $T$ is an oriented tree with just one vertex $s$ of in-degree zero (out-degree zero). An \textbf{out-branching $(B^+)$} is a spanning out-tree, and symmetrically, an \textbf{in-branching $(B^-)$} is a spanning in-tree. 

\begin{theorem}[Edmonds' branching theorem]{\cite{Edmonds1973}}
A directed multigraph $D=(V,A)$ with a special vertex $z$ has $k$ arc-disjoint out-branchings rooted at $z$ if and only if
\begin{equation}\label{eq:edmonds-branching}
d^{-}(X)\geq k\quad \forall \ \emptyset\neq X\subseteq V-z.
\end{equation}
There exists a polynomial algorithm for finding $k$ arc-disjoint out-branchings from a given root $s$ in a directed multigraph which satisfies \eqref{eq:edmonds-branching}.

\end{theorem}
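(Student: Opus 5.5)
Necessity is immediate. Suppose $B_1,\dots,B_k$ are arc-disjoint out-branchings rooted at $z$, and take $\emptyset\neq X\subseteq V-z$. Each $B_i$ contains a path from $z$ to every vertex of $X$, so each $B_i$ contains at least one arc entering $X$. Because the $B_i$ are arc-disjoint, these arcs are distinct, and therefore $d^{-}(X)\geq k$.

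For sufficiency I would follow Lovász's constructive approach and induct on $k$. The case $k=0$ is trivial. The goal is to find one out-branching $B$ rooted at $z$ such that $D-A(B)$ still satisfies $d^{-}(X)\geq k-1$ for every $\emptyset\neq X\subseteq V-z$; induction then gives the other $k-1$ branchings.

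I would build $B$ as an out-tree $T$ rooted at $z$, adding one arc at a time while preserving the invariant
\[
d^{-}_{D-A(T)}(X)\geq k-1 \quad\text{for all } \emptyset\neq X\subseteq V-z,
\]
with strict inequality $d^-_{D-A(T)}(X)\ge k$ for every $X$ with $X\cap V(T)=\emptyset$. The second inequality holds automatically, because arcs of $T$ never enter a set disjoint from $V(T)$. An arc $uv$ with $u\in V(T)$ and $v\notin V(T)$ can be added safely unless it enters some \emph{critical} set $X$, meaning one with $d^-_{D-A(T)}(X)=k-1$.

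The key step is to show that whenever $V(T)\neq V$, a safe arc exists. If no critical set meets $V-V(T)$, any arc leaving $V(T)$ works; such an arc exists because $d^-(V-V(T))\ge k\ge1$. Otherwise, take a critical set $X$ that is inclusion-minimal among the critical sets with $X-V(T)\neq\emptyset$. Such an $X$ must meet $V(T)$, since a set disjoint from $V(T)$ has in-degree at least $k$. Now consider the set $X-V(T)$: it has in-degree at least $k$ in $D-A(T)$, while $X$ has in-degree $k-1$. Hence some arc $uv$ of $D-A(T)$ goes from $X\cap V(T)$ to $X-V(T)$. This is the arc I want to add.

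The main obstacle is proving that $uv$ enters no critical set. Suppose it enters a critical set $Y$. Then $v\in X\cap Y$, $u\in X-Y$, and $X\cup Y\neq V$ since $z\notin X\cup Y$. The in-degree function $d^-$ is submodular, so
\[
d^-(X\cap Y)+d^-(X\cup Y)\le d^-(X)+d^-(Y)=2(k-1).
\]
Both $X\cap Y$ and $X\cup Y$ have in-degree at least $k-1$, so both are critical. But $X\cap Y$ contains $v\notin V(T)$ and is a proper subset of $X$ because $u\notin Y$. This contradicts the minimality of $X$.

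Each step of this procedure takes polynomial time: testing whether a set is critical and finding a minimal critical set both reduce to min-cut computations between $z$ and the vertices. The whole construction is therefore a polynomial algorithm. For multigraphs, parallel arcs simply count with multiplicity, and nothing in the argument changes.
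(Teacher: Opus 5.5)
Your proof is correct. The paper does not prove this theorem; it cites Edmonds and the algorithmic proof of Lov\'asz, and your argument (grow an out-tree from $z$ while keeping $d^-_{D-A(T)}(X)\geq k-1$, then use an inclusion-minimal critical set and submodularity of $d^-$ to find a safe arc) is exactly that Lov\'asz proof. For the algorithmic claim, the simplest justification is to test each arc $uv$ leaving $V(T)$ directly: use $|V|-1$ max-flow computations to check whether $D-A(T)-uv$ still has $k-1$ arc-disjoint $(z,w)$-paths for every $w\neq z$, and your existence argument guarantees that some candidate passes.
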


In contrast, Thomassen (see \cite{BangJensen1991}) proved that the (has an out-branching, has an in-branching)-arc-partitioning problem, that is, deciding whether a general digraph contains an out-branching and an in-branching that are arc-disjoint is NP-complete~. In the same paper, Bang-Jensen obtained a structural characterization for tournaments. Thus, already for branchings, replacing one out-branching by an in-branching completely changes the computational complexity of the problem. A digraph $D$ is $k$-arc-strong if it remains strongly connected after deleting any set of at most $k-1$ arec. It is an open problem, due to Thomassen \cite{thomassen1989} whether there exist a natural number $K$ so that every $K$-arc-strong digraph has an out-branching and an in-branching which are arc-disjoint.

Another natural example is the $(\mathrm{strong},\mathrm{strong})$-arc-decomposition problem, usually referred to as the strong arc-decomposition problem. This problem is NP-complete already for 2-regular digraphs~\cite{BangJensenYeo2012}. On the other hand, Bang-Jensen and Yeo~\cite{BangJensenYeo2004} proved that every $2$-arc-strong semicomplete digraph admits a strong arc decomposition, apart from one exceptional digraph. The corresponding problem has subsequently been studied for several other well-structured classes of digraphs, including locally semicomplete digraphs~\cite{BangJensenHuang2012}, and semicomplete compositions~\cite{BangJensenGutinYeo2020,SunGutinAi2019}, and split digraphs~\cite{BangJensenWang2025}. It trivial that the arc set of any digraph can be partitioned into two acyclic subdigraphs (just take an arbitrary ordering of the vertices and take the forward, resp backwards arcs in the two sets), Wood~\cite{Wood2004} proved that, for every integer $s\geq2$, the arc set of every digraph $D$ can
be partitioned into $s$ acyclic subdigraphs $D_1,\ldots{},D_s$ such that the out-degree $d^+_{D_i}$ of every vertex in each of these digraphs is at most $\lceil\frac{d^+_D(v)}{s-1}\rceil$.

Besides structural existence results for restricted digraph classes, the algorithmic complexity of finding arc-disjoint spanning structures has recently received considerable attention: 
Bang-Jensen and Yeo~\cite{BangJensenYeo2012} proved, among other results, that the (has an out-branching,connected)-arc-partitioning problem and the (strong,connected)-arc-partitioning problem are both NP-complete. 
Bang-Jensen, Bessy, Gon\c{c}alves, and Picasarri-Arrieta~\cite{BJBGPA2022} initiated a systematic study of arc-decomposition problems by considering  fifteen prescribed properties and determined the complexity of all of the  120 possible $(P_1,P_2)$-arc-decomposition problems for these properties (some were known already). Ai,Gutin,Lei and Shi \cite{aiarXiv2608.00115} proved that is is NP-complete to determine whether a digraph has a (strong,antistrong)-arc-partition and also whether it has a (strong,2-edge-connected)-arc-partition. H\"o{}rsch and Picassarri-Arrieta \cite{hoerschTEJC31} studied arc-decompositions into directed linear forests. A \textbf{matching} in a digraph is a set of arcs no two of which have a common end-vertex, and it is \textbf{perfect} if every vertex is incident with exactly one arc of the matching. \textbf{cycle factor} is a spanning subdigraph in which every vertex has in-degree and out-degree one. Recently, Borsik and Madarasi~\cite{BorsikMadarasi2025} studied several $(\mathcal F,\mathrm{acyclic})$-arc-decomposition problems, including cases where $\mathcal F$ is a matching or a perfect matching. In particular, they proved that the $(matching,acyclic)$-arc-decomposition and the (perfect matching, acyclic)-arc-decomposition problems are NP-complete. 

A property $P$ is \textbf{upwards closed} (\textbf{downwards closed}) if every superdigraph (subdigraph) of a digraph $D$ with property $P$ also has property $P$. The properties 'having a perfect matching' and 'having a cycle-factor' are  upwards closed: it suffices to determine whether the digraph $D$ admits a perfect matching or a cycle factor. The property 'having no odd directed cycle' is downwards closed.  Therefore, the (having a perfect matching, having no odd directed cycle)- and (having a cycle factor, having no odd directed cycle)-arc-partition problems are both polynomially solvable. However, the results in this paper show that both (is a perfect matching, having no odd directed cycle) and (is a cycle factor, having no odd directed cycle)-arc-partition problems are NP-complete.
On the other hand, since the property 'strong' is upwards closed, the problems (having {a} perfect matching, strong) and (is a perfect matching, strong) share the same computational complexity. Similarly, the problems (having {a} perfect matching, having $B^+$) and (is a perfect matching, having $B^+$) share the same computational complexity.

A digraph is \textbf{bipartite} if its underlying graph is bipartite. In 2022, Bang-Jensen et al.~\cite{BJBGPA2022} pointed out one may consider $(P_1,P_2)$-arc decomposition problems with other natural properties, including perfect matchings, cycle factors and bipartite subgraphs, and posed the following problem.

\begin{problem}[Bang-Jensen, Bessy, Gon\c{c}alves, and Picasarri-Arrieta~\cite{BJBGPA2022}]

What is the complexity of the following arc-decomposition problems?

\begin{itemize}

    \item the $(\text{cycle factor},\text{ having no odd directed cycle})$-arc-decomposition problem;

    \item the $(\text{perfect matching},\text{ having no odd directed cycle})$-arc-decomposition problem;

    \item the $(\text{perfect matching},\mathrm{strong})$-arc-decomposition problem;

    \item the $(\text{perfect matching},\text{ having }B^+)$-arc-decomposition problem;

    \item the $(\mathrm{bipartite},\Delta^+\leq k)$-arc-decomposition problem.

\end{itemize}

\end{problem}

The fifth problem is closely related to the vertex-partition problems studied by Bang-Jensen, Bessy, Havet, and Yeo~\cite{BangJensenBessyHavetYeo2018}. Indeed, a digraph $D$ admits a $(\mathrm{bipartite},\Delta^+\leq k)$-arc-decomposition if and only if $V(D)$ has a partition $(V_1,V_2)$ such that $\Delta^+(D[V_i])\leq k$ for $i=1,2$. Hence, for every fixed $k\geq1$, this problem is NP-complete by the results in~\cite{BangJensenBessyHavetYeo2018}.

The purpose of this paper is to settle the remaining four problems. Our main results are the following.

\begin{theorem}\label{thm:11}
It is NP-complete to decide whether a digraph has a $(${is a }perfect matching, having no odd directed cycle$)$-arc-decomposition.
\end{theorem}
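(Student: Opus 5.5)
Membership in NP is immediate. A certificate is the set $A_1$. We check that $A_1$ is a perfect matching, meaning every vertex is incident with exactly one arc of $A_1$. We then check that $D-A_1$ has no odd directed cycle. Deciding whether a digraph contains an odd directed cycle is polynomial: every closed directed walk of odd length contains an odd directed cycle. So it suffices to look for an odd closed walk, which can be done by a BFS in the parity-doubled digraph on $V\times\{0,1\}$, checking whether some strong component contains both copies of a vertex.

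For hardness I would reduce from 3-SAT, or more conveniently from monotone Not-All-Equal 3-SAT. The key observation is that a directed $3$-cycle $C$ forces the decomposition. Since $A_2$ contains no odd cycle, at least one arc of $C$ lies in $A_1$. Since $A_1$ is a matching, at most one arc of $C$ lies in $A_1$, because any two arcs of a 3-cycle share a vertex. So exactly one arc of every directed triangle is a matching arc, and choosing that arc is a ternary choice.

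The gadgets would be built from this.

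\textbf{Variable gadget.} For each variable $x$, take a long cycle of even length $2m$, together with attached directed triangles or chords. These should force the matching restricted to the gadget to be one of exactly two alternating patterns, ``true'' or ``false'', possibly after adding private pendant-style structures so that perfectness forces alternation. Each occurrence of $x$ is exposed as a designated arc $e_{x,j}$, which lies in $A_1$ exactly when $x$ is true (or exactly when $x$ is false, for negated occurrences).

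\textbf{Clause gadget.} For a clause $c=(\ell_1\vee\ell_2\vee\ell_3)$, build an odd directed cycle that passes through the three literal arcs. It is killed by $A_1$ precisely when at least one of its arcs is in $A_1$. To avoid vertex sharing conflicts, each literal arc should be a fresh copy linked to the variable gadget. This linking is done through a ``copy'' gadget: two triangles sharing structure, arranged so that membership of one arc in $A_1$ forces membership of a corresponding arc. The clause cycle then has odd length and uses only copy arcs and connector arcs. The connector arcs must be forced out of $A_1$; this holds because their endpoints are already covered by forced matching arcs.

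The proof then splits into two directions. (i) A satisfying assignment yields a perfect matching $A_1$. We must then verify that $D-A_1$ has no odd directed cycle; the clean way is to exhibit a 2-colouring argument, or a potential function mod 2, on the strong components of $D-A_1$. (ii) Conversely, any valid decomposition forces consistent gadget states, and hence a satisfying assignment.

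The main obstacle is (i): ruling out \emph{unintended} odd cycles in $D-A_1$ that wander between gadgets. To handle this, I would make all inter-gadget arcs go one way, from variables to copies to clauses, with only local back-arcs. That way every cycle of $D$ stays inside one gadget plus its designated clause cycle, and each such piece can be checked to be bipartite, i.e. even-cycle-only, after removing the matching.
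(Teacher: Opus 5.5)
Your NP-membership argument is correct, and so is your observation that every directed triangle has exactly one arc in $A_1$. The hardness part, however, has a genuine gap. It lists properties the gadgets \emph{should} have (``attached directed triangles or chords'', ``two triangles sharing structure, arranged so that\ldots'') but never constructs them. So none of the three key claims can be checked: that each variable gadget has exactly two states, that the copy mechanism works, and direction (i).

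Worse, the one concrete mechanism you propose undercuts the one concrete argument you give for (i). Suppose all inter-gadget arcs went forward in a fixed order of the gadgets. Then the gadget index would be non-decreasing along any directed cycle, so every directed cycle would stay inside a single gadget. But if a variable arc's status is passed to a copy arc through directed triangles, some triangle must meet both the variable gadget and the copy gadget. That triangle forces a backward inter-gadget arc. Once such arcs exist, $D-A_1$ can contain odd cycles running through a variable gadget and several of its copy gadgets. You would have to show that each of these is hit by the $A_1$ you build from a satisfying assignment. That is exactly the step you flag as the main obstacle, and it remains open.

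Separately, an odd clause cycle only encodes an OR of its literal arcs. A reduction from monotone NAE-3SAT would therefore need a second cycle through the complementary occurrences.

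None of the triangle or copy machinery is needed. The ingredients you mention in passing (pendant vertices, connector arcs forced out of $A_1$) suffice, and this is what the paper does.
\begin{itemize}
\item \textbf{Variable gadget.} It consists of two internally disjoint directed $(u,v)$-paths with $2p$ and $2q$ internal vertices, so its underlying graph is an even cycle.
\item \textbf{Literal arcs.} The $r$-th positive occurrence of $x$ uses the arc $y_{2r-1}y_{2r}$, and negative occurrences use $z_{2s-1}z_{2s}$. Each occurrence has its own arc, so the vertex-sharing conflict you worried about never arises.
\item \textbf{Clause gadget.} It is a directed $9$-cycle through its three literal arcs and three connector vertices $r_2,s_2,t_2$. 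Each connector vertex has a private pendant in-neighbour.
\item \textbf{Forcing.} The pendants force $r_1r_2,s_1s_2,t_1t_2\in A_1$, so all connector arcs lie in $A_2$. Hence $A_1$ restricted to each variable gadget is one of the two perfect matchings of an even cycle.
\item \textbf{Direction (i).} The clean check is degree-based rather than a $2$-colouring. In $D-A_1$, every non-literal arc of a variable gadget goes from a vertex of in-degree $0$ to a vertex of out-degree $0$. So the only possible directed cycles are the clause $9$-cycles, and each is broken by a true literal whose arc lies in $A_1$.
\end{itemize}
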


\begin{theorem}\label{thm:12}
It is NP-complete to decide whether a digraph has a $(\text{perfect matching}$, $strong)$-arc-decomposition, even when restricted to strongly connected digraphs.
\end{theorem}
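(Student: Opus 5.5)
Membership in NP is immediate: given a partition $A(D)=M\cup A_2$, we check in polynomial time that $M$ is a perfect matching (every vertex is incident to exactly one arc of $M$) and that $D[A_2]$ is strongly connected. For hardness we reduce from a known NP-complete problem with a natural "choose one arc per vertex" structure. Our first choice is \textsc{3-SAT}, or more conveniently monotone/positive \textsc{1-in-3-SAT}, since "exactly one" matches the perfect-matching constraint well. The guiding observation is this: if a vertex $v$ has in-degree $1$ or out-degree $1$ in $D$, then the corresponding unique arc cannot lie in $M$, because $D-M$ must remain strong. Hence degree-two gadget vertices with in- and out-degree $1$ are forced to be matched by nothing. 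That is impossible, so every vertex must have total degree at least $3$, and a vertex with $d^-=d^+=... $ requires care. We therefore design gadgets in which each vertex has in- and out-degree at least $2$ on the relevant side, and the matching arc chosen at a vertex removes one of its two out-arcs or in-arcs. This encodes a binary choice, and the remaining arc then "carries" strong connectivity.

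\textbf{Gadgets.} For each variable $x_i$ we build a variable gadget: a directed cycle of even length $2k_i$ (with $k_i$ the number of occurrences) together with chords or doubled arcs, arranged so that its internal vertices admit exactly two perfect matchings, $M_i^{\text{true}}$ and $M_i^{\text{false}}$, corresponding to alternate arcs of an even cycle. Every internal vertex must be matched inside the gadget, and because the gadget is an even cycle of vertices each having only gadget arcs available for matching, the matching on the gadget is one of these two alternating classes. Deleting either class still leaves the gadget connected to the rest of $D$ through "bypass" arcs, and the two choices differ in which bypass arcs become the only route into certain literal vertices. For each clause $C_j$ we add a clause vertex $c_j$, or a small clause gadget, whose only entering arcs come from literal positions in the variable gadgets. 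Such an arc is usable in $D-M$ only if the corresponding literal arc was not consumed by $M$, that is, only if the literal is true. We also add a root vertex $r$ with arcs to and from every gadget, which guarantees that $D$ is strong and that reachability \emph{from} $r$ is the only nontrivial issue. The clause vertices themselves must also be matched; we handle this by pairing each $c_j$ with a private partner $c_j'$ via a forced arc, where $c_j'$ has enough other arcs to stay strongly attached.

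\textbf{Proof of equivalence.} Given a satisfying assignment, we choose $M$ as the union of the chosen alternating classes and the forced clause pairs. We then verify that every vertex can reach $r$ and be reached from $r$ in $D-M$; each $c_j$ is entered through a true literal. Conversely, given a decomposition, we argue that $M$ restricted to each variable gadget is one of the two alternating classes, which defines an assignment. Strong connectivity of $D-M$ forces each $c_j$ to have an entering arc outside $M$, and by construction that arc comes from a true literal.

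\textbf{Main obstacle.} The hard part is the converse rigidity. We must rule out matchings that use the connecting arcs between gadgets or the root arcs, which would mix the variable choices. We would try to enforce this by giving $r$ and the connecting structure enough parallel routes (for example, by replacing $r$ by a large symmetric structure in which any matching arc can be spared) while making each gadget vertex's non-gadget arcs essential, so that using them in $M$ leaves some gadget vertex with in- or out-degree $0$ in $D-M$. A parity or counting argument on the even cycles should then pin down the two alternating classes.
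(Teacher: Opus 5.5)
Your NP-membership argument is fine, but the hardness part is not yet a reduction. The variable gadget, bypass arcs, clause gadget and root structure are described only by the properties you want them to have, and the converse is explicitly left open as the ``main obstacle.'' The required properties also conflict:
\begin{itemize}
\item You need every variable-gadget vertex to be matched inside its gadget because it has ``only gadget arcs available for matching.'' Yet the same vertices carry bypass arcs, arcs to and from $r$, and arcs into clause vertices, and any of these could be placed in $M$.
\item You cannot neutralise those arcs the way the paper's 3-SAT reduction for Theorem~\ref{thm:11} does, with degree-one vertices that force matching arcs. As you observed yourself, every vertex here must have degree at least $3$. So the arc $c_jc_j'$ is not actually forced if $c_j'$ has other arcs, and $r$ must itself be matched, possibly into a gadget.
\item Once $c_j$ is matched to $c_j'$, the arcs entering $c_j$ are never in $M$. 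Whether such an arc is ``usable'' therefore depends only on whether its tail is reachable from $r$ in $D-M$. If $r$ has arcs into the literal positions this is automatic and the clause constraint disappears. If not, you have not said how that reachability is tied to the truth value rather than to some other route.
\end{itemize}
As it stands, neither direction of the equivalence is established.

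The missing idea is that your degree observation already settles the problem by counting, with no gadgets at all. Suppose every vertex of $D$ has total degree exactly $3$.
\begin{itemize}
\item A perfect matching $M$ has $|V|/2$ arcs, so $D-M$ has exactly $|V|$ arcs.
\item Strong connectivity forces in- and out-degree at least $1$ at every vertex of $D-M$, hence exactly $1$. So $D-M$ is a strong cycle factor, i.e.\ a Hamiltonian cycle.
\item Conversely, the complement of a Hamiltonian cycle in such a digraph has degree exactly $1$ at every vertex, so it is a perfect matching.
\end{itemize}
The paper obtains such digraphs by vertex-splitting a $2$-regular digraph ($v\mapsto v'v''$, $vu\mapsto v''u'$). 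The split digraph's Hamiltonian cycles correspond exactly to those of the original, so hardness follows from Theorem~\ref{thm:Ramanath}. The restriction to strong inputs comes for free: a non-strong digraph is trivially non-Hamiltonian, and splitting preserves strong connectivity.
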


\begin{theorem}\label{thm:13}
It is NP-complete to decide whether a digraph has a $(\text{perfect matching},$ $ \text{having}$ $B^+)$-arc-decomposition.
\end{theorem}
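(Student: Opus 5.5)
We reduce from 3-SAT. As noted in the introduction, having an out-branching is upwards closed, so a $(\text{perfect matching},\text{having }B^+)$-arc-decomposition exists if and only if $D$ has a perfect matching $M$ such that $D-M$ has an out-branching. Membership in NP is clear: given $M$, check that it is a perfect matching and that $D-M$ has an out-branching, both in polynomial time. For hardness, take a 3-SAT formula $F$ with variables $x_1,\dots,x_n$ and clauses $C_1,\dots,C_m$, and build a digraph $D_F$ in which every out-branching of $D-M$ is rooted at a fixed vertex $r$. The idea is that the arcs of $M$ are exactly the arcs we delete, so each forced or chosen matching arc blocks reachability in a controlled way.

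\emph{Construction.} We use one repeated device, a \emph{pendant 2-cycle}: a vertex $v'$ whose only neighbour is $v$, joined by the arcs $vv'$ and $v'v$. Any perfect matching must contain exactly one of these two arcs. The other arc survives, so $v'$ is reachable in $D-M$ whenever $v$ is, and $v$ is matched to $v'$.
\begin{itemize}
\item \textbf{Root.} Add a root $r$ with a pendant 2-cycle to $r'$.
\item \textbf{Variable gadget.} For each variable $x$, add vertices $h_x, x_T, x_F, g_x$ and the arcs $rh_x$, $h_xx_T$, $h_xx_F$, $x_Tg_x$, $x_Fg_x$, $rg_x$.
\item \textbf{Clause gadget.} For each clause $C_j$, add a vertex $c_j$ with a pendant 2-cycle to $c_j'$, and an arc from the literal vertex $\ell\in\{x_T,x_F\}$ to $c_j$ for every literal $\ell$ of $C_j$.
\end{itemize}

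\emph{Forced structure.} Because of the pendant 2-cycles, $r$ is matched to $r'$ and each $c_j$ is matched to $c_j'$. Hence $h_x$ cannot be matched to $r$, so it is matched to exactly one of $x_T,x_F$. The other literal vertex must then be matched to $g_x$, since its remaining neighbours $r$ and the clause vertices are already matched, while $h_x$ is taken. Thus the perfect matchings of $D_F$ correspond bijectively to truth assignments: $x$ is true exactly when $h_xx_F\in M$ (and $x_Tg_x\notin M$). In $D-M$:
\begin{itemize}
\item $r'$, every $h_x$ and every $g_x$ are reachable from $r$.
\item The literal vertex matched to $h_x$ has its only in-arc in $M$, so it is unreachable.
\item The other literal vertex keeps its in-arc from $h_x$, so it is reachable; precisely the true literals are reachable.
\item No arc entering a clause vertex $c_j$ from a literal lies in $M$, so $c_j$, and hence $c_j'$, is reachable if and only if some literal of $C_j$ is true.
\end{itemize}

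\emph{Correctness.} Every vertex other than $r$ has an in-arc, while $r$ has in-arcs only from $r'$, which is matched to $r$. Hence an out-branching of $D-M$ must be rooted at $r$ and must reach all vertices. But each false literal vertex has no in-arc outside $M$. So the correspondence above fails for spanning reachability unless we also make the false literal vertex reachable without letting it reach clauses.

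I expect this to be the main obstacle. My first fix is to add an arc $g_xx_T$ and an arc $g_xx_F$. The blocked literal vertex is matched to $h_x$, not $g_x$, so the relevant arc from $g_x$ is not in $M$ and the literal vertex becomes reachable via $r\to g_x$. This, however, reopens every literal to the clauses and destroys the encoding.

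The cure I would pursue is to separate the two roles of a literal vertex: reaching clauses, and being reachable itself. Replace each arc $\ell c_j$ by a gadget that is traversable only if $\ell$ is reached through $h_x$. For instance, let the clause arcs leave from the $h_x$-side partner, using a chain $h_x\to x_T\to y_T$ with $x_T$ matched either to $h_x$ or to $y_T$, arranged so that the clause vertices are reachable only through the unmatched arc of the true side. Meanwhile the false side is reachable by a separate back-route from $r$ that enters the chain after the clause exits.

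Once a gadget with this property is fixed, the two directions are routine. A satisfying assignment gives the matching $M$ described above, and $D-M$ reaches everything from $r$, hence has an out-branching by a BFS tree. Conversely, if $M$ leaves $D-M$ with an out-branching, then the read-off assignment satisfies every clause, since each $c_j$ was reached through a true literal.
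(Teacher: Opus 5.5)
There is a genuine gap: the one nontrivial ingredient, a gadget that lets a false literal be reached without reaching its clauses, is never constructed, and the constructions you do give fail.

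\textbf{Your two explicit constructions.}
\begin{itemize}
\item In the initial $D_F$, whichever literal vertex is matched to $h_x$ loses its only in-arc. Together with $r$ or $r'$, this gives at least two vertices of in-degree $0$ in $D_F-M$. So every formula is mapped to a no-instance.
\item After adding $g_xx_T$ and $g_xx_F$, every literal vertex, and hence every $c_j$, is reached. So every formula is mapped to a yes-instance.
\end{itemize}

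\textbf{Why the sketched cure cannot work.} The obstruction comes from the spanning requirement itself. In any valid decomposition every vertex is reached from the root. In particular this holds for the vertex $X$ from which a false literal's arc toward $c_j$ leaves. Since $c_j$ is forced to be matched to $c_j'$, no arc from the literal side into $c_j$ lies in $M$. Soundness therefore needs, for every false occurrence, one of two things: an arc of $M$ on a private path from $X$ to $c_j$, or $X$ being reachable only through $c_j$. A back-route that ``enters the chain after the clause exits'' provides neither. If the back-route reaches the exit vertex, that vertex reaches $c_j$. If it does not, the exit vertex is unreached and no out-branching exists. So ``once a gadget with this property is fixed, the two directions are routine'' defers exactly the part that needs proof.

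\textbf{Smaller slip.} For a pendant $2$-cycle on $v,v'$, if $M$ contains $vv'$ then $v'$ has no in-arc in $D-M$. So $v'$ is not automatically reachable, and the root may be $r'$ rather than $r$.

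\textbf{The paper's route.} The paper needs no gadgets at all. It reduces from the Hamiltonian cycle problem in $2$-regular digraphs: split every vertex and delete a single arc $w'w''$. The resulting digraph has $2n$ vertices and $3n-1$ arcs. The complement of a perfect matching therefore has exactly $2n-1$ arcs and must itself be the out-branching. In that branching:
\begin{itemize}
\item $w''$ has in-degree $0$, so it is the root;
\item $w'$ has total degree $1$;
\item every other vertex has total degree $2$.
\end{itemize}
This forces a Hamiltonian $(w'',w')$-path, which is exactly a Hamiltonian cycle of $D$. The tight arc count is what makes that argument short. If you stay with 3-SAT, you must give an explicit occurrence gadget meeting the condition above and check both directions for it.
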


\begin{theorem}\label{thm:14}

It is NP-complete to decide whether a digraph has a
$($is a cycle factor, having no odd directed cycle$)$-arc-decomposition.
\end{theorem}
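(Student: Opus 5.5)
Membership in NP is immediate: given a partition $A(D)=A_1\cup A_2$, we can check in polynomial time that $D[A_1]$ is a cycle factor, i.e. every vertex has in- and out-degree one in $A_1$. We can also check that $D[A_2]$ has no odd directed cycle. That is polynomial: a digraph has no odd directed cycle iff every strong component is bipartite, since a strong digraph with an odd closed walk contains an odd directed cycle. So the work is the hardness. I plan a reduction from 3-SAT, or more conveniently from a variant such as monotone not-all-equal 3-SAT, in the spirit of the proof of Theorem~\ref{thm:11}. The reduction should produce a digraph $D$ in which every vertex has bounded in- and out-degree, so that the possible cycle factors are locally rigid.

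\textbf{Variable gadgets.} For each variable $x_i$ I build a gadget with exactly two cycle factors restricted to the gadget. A natural choice is a long directed "ladder": vertices $u_1,\dots,u_{2m}$ arranged on a cycle, with arcs $u_j\to u_{j+1}$, plus a second parallel set of vertices $w_j$ and crossing arcs. The gadget is designed so that the only ways to cover its vertices by vertex-disjoint cycles correspond to "true" versus "false", e.g. taking all 2-cycles $u_{2j-1}u_{2j}$ versus all 2-cycles $u_{2j}u_{2j+1}$ (indices mod $2m$). The arcs not used by the cycle factor remain in $D[A_2]$. I will arrange that in either choice the leftover arcs inside the gadget form only even cycles, or cycles broken up, so that each gadget alone is fine.

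\textbf{Clause gadgets.} For each clause $C$ I add a directed cycle through designated "literal arcs" taken from the gadgets of the variables occurring in $C$, closed up via short connecting paths of length chosen so that the resulting directed cycle is odd. Such an odd cycle survives in $D[A_2]$ precisely when none of its literal arcs is absorbed into the cycle factor, i.e. when all literals of $C$ are false. The connecting vertices themselves must be covered by the cycle factor. To handle them I give each connecting vertex a private 2-cycle partner (a pendant digon), so that it is forced into that digon. The arcs of the clause cycle through it then remain in $A_2$. The parity of each clause cycle is tuned by adjusting the lengths of these forced-digon paths.

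\textbf{Correctness and the main obstacle.} The forward direction is direct: a satisfying assignment gives a cycle factor in which every clause cycle loses some arc. The converse is where I expect the main difficulty. First, forced digons and the degree structure must make every cycle factor consistent with some truth assignment on each gadget. Second, and more delicately, $D[A_2]$ must have no \emph{unintended} odd cycles, ones that combine pieces of several clause cycles and gadget paths. My approach is to make all gadget-internal paths and clause-connecting paths have even length except for a single designated odd "parity arc" in each clause cycle. I will then argue, via a potential or 2-colouring of the leftover digraph, that any cycle in $A_2$ avoiding every parity arc is even. Cycles through a parity arc must then traverse an entire clause cycle. Choosing this 2-colouring carefully, possibly by subdividing arcs, is the step I would work out first.
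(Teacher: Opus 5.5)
Your NP-membership argument is fine; it rests on the same fact the paper uses. The skeleton of your hardness part is also the right kind of construction: 3-SAT, variable gadgets with two admissible cycle factors, odd clause cycles threaded through literal arcs, and connecting vertices forced into pendant digons. The paper omits its own proof and reuses the concrete 3-SAT reduction for the (bipartite, cycle factor) problem from \cite{BJBGPA2022}, rechecking only the odd-cycle side. Your reduction, however, is never actually specified, and the two properties its correctness depends on are not established.

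First, the variable gadget. The 2-cycles $u_{2j-1}u_{2j}$ and $u_{2j}u_{2j+1}$ need the reverse arcs $u_{j+1}u_j$ as well. A bidirected even cycle of length at least $4$ has four cycle factors, not two: the two digon matchings and the two directed Hamiltonian cycles. The $w_j$ and ``crossing arcs'' that might exclude the latter are never specified. If all literal arcs run in the same rotational direction, one Hamiltonian factor absorbs literal arcs of both $x_i$ and $\bar x_i$, and the direction ``decomposition $\Rightarrow$ satisfiable'' fails. You would at least need to orient positive and negative literal arcs in opposite rotational directions, so that each Hamiltonian factor is consistent with one truth value.

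The more serious gap is ruling out unintended odd cycles. This is needed in the \emph{forward} direction, not the converse. Breaking every clause cycle does not by itself make $D[A_2]$ free of odd cycles, so the forward direction is not ``direct''. Your proposed tool cannot supply this. A 2-colouring in which exactly the parity arcs are monochromatic only shows that a cycle has the parity of the number of parity arcs it uses. It gives no reason why a cycle through a parity arc should traverse a whole clause cycle.

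Concretely, consider a cycle that uses the parity arc of $C$, leaves $C$'s cycle at a gadget vertex, and returns through leftover gadget arcs and other clauses' connecting paths without using a second parity arc. Such a cycle is odd. It arises in natural versions of your gadget, e.g.\ when $u_{2j}$ is the head of one literal arc and the tail of another. The false one of these stays in $A_2$, so a path can switch there from one clause's connecting path to another's.

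What is missing is a reachability argument, as in the proof of Theorem~\ref{thm:11}, showing that every directed cycle of $D[A_2]$ is either a gadget digon or a complete clause cycle. This needs a concrete design condition. One suitable condition is that no digon of a variable gadget contains end-vertices of two different literal arcs. Then in $D[A_2]$ each gadget splits into disjoint digons, each meeting the connecting paths of at most one clause. A cycle entering a gadget from $C$'s connecting path can then leave only along $C$'s cycle, and only if $C$'s literal arc there lies in $A_2$. Without such a condition and its verification, the reduction is not established.
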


In {\cite{BJBGPA2022}}, Bang-Jensen et al. proved the NP-completeness of the (bipartite, cycle factor)-arc-decomposition problem by a polynomial reduction from 3-SAT. In fact, using the same reduction process, we only need to modify the verification for bipartite digraphs in the original proof to a verification for digraphs with no odd directed cycles, and then we obtain a proof for the (is a cycle factor, having no odd directed cycle)-arc-decomposition problem. Due to the special structure of the reduced digraph, this can be easily done. Hence we omit the proof of Theorem \ref{thm:14}

Furthermore, we prove that each of the arc‑decomposition problems considered in Theorems \ref{thm:11} to \ref{thm:14} remain NP‑complete even when restricted to strong digraphs (For Theorem \ref{thm:12} this was already mentioned above)

\begin{theorem}\label{thm:15}
The following arc-decomposition problems are all NP-complete even when restricted to strong digraphs.

\begin{itemize}
    \item[$\big($i$\big)$] The $\big($is a perfect matching, having no odd directed cycle$\big)$-arc-decomposition problem.
    \item[$\big($ii$\big)$] The $\big($perfect matching, having $B^+$$\big)$-arc-decomposition problem. 
    \item[$\big($iii$\big)$] The $\big($is a cycle factor, having no odd directed cycle$\big)$-arc-decomposition problem.
\end{itemize}
\end{theorem}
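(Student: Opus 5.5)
We reduce from the general (not necessarily strong) versions, Theorems~\ref{thm:11}, \ref{thm:13} and \ref{thm:14}. For each problem we give a polynomial transformation $D\mapsto D'$ such that $D'$ is strong and $D'$ has the required decomposition if and only if $D$ does. Membership in NP is clear in all three cases: given a partition, we can check in polynomial time that a set is a perfect matching or a cycle factor, that the rest has an out-branching, and that it has no odd directed cycle (strong components bipartite in the directed-parity sense, via BFS parity labels). The common idea is to attach a small gadget that adds connectivity arcs forming a strong digraph. The gadget must be forced into the second part in a way that creates no forbidden structure and does not change which arcs of $D$ can lie in the first part.

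\emph{(ii)} This is the easiest case, because having $B^+$ is upwards closed. Let $D$ have $n$ vertices and add a new vertex pair, or more simply a set of new vertices, arranged as follows. Add vertices $x,y$ with the arc $xy$, arcs $v\to x$ for every $v\in V(D)$, and arcs $y\to v$ for every $v\in V(D)$. Then $D'$ is strong.

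In any perfect matching $M'$ of $D'$, the vertex $x$ must be matched. If $xy\in M'$, then $M'\setminus\{xy\}$ is a perfect matching of $D$. If instead $x$ is matched to some $v$ and $y$ to some $u$, we get a problem. To avoid this, we replace the construction with a pendant forcing gadget: $x$ and $y$ are joined only by $xy$ as far as matchings are concerned, and the connecting arcs go through extra vertices that are forced to be matched among themselves. For instance, we make the connections via paths $v\to a_v\to b_v\to x$ and $y\to c_v\to d_v\to v$, and add the arcs $a_vb_v$ and $c_vd_v$, which must be matching arcs.

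The plan is to choose the gadget so that matching-forcing degree counting works. We must then check that an out-branching rooted at the root $r$ of $B^+(D)$ extends to $D'$ using the unmatched gadget arcs. Conversely, an out-branching of $D'-M'$ restricted to $D$ needs care, since paths through the gadget may shortcut. To prevent this, we route all gadget arcs into $D$ from a single vertex, namely the root candidate. In practice, we reduce instead from the rooted version, where the root is fixed, which the reduction in Theorem~\ref{thm:13} likely already provides.

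\emph{(i), (iii)} Here we add a hub $h$ with arcs $v\to h$ and $h\to v$. Arcs $v\to h$ and $h\to v$ together create even 2-cycles. The risk is an odd cycle $h\to u\leadsto v\to h$ whenever the $u$–$v$ path has even length. To control parity we use a bipartition: we refine the gadget so that $h$ connects only to one vertex of a strong backbone. Alternatively, we attach to each vertex $v$ a 2-cycle $v\leftrightarrow w_v$ and link the $w_v$ in a long directed even cycle subdivided so that every cycle through the gadget has even length. For the perfect matching case, each new vertex must also be matchable, so we add new vertices in pairs forced onto matching arcs, as in (ii). For the cycle factor case, we add forced 2-cycles $w_v\leftrightarrow w'_v$ in the factor.

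The main obstacle is the parity control: the gadget must not create odd cycles combining gadget paths with paths of $D-A_1$, and whether such cycles are odd depends on the decomposition. My first attempt is to exploit the structure of the specific reduced digraphs from Theorems~\ref{thm:11} and \ref{thm:14}. There the complement part is known to be acyclic or bipartite with an explicit 2-coloring that does not depend on the truth assignment. We then add connecting arcs only between vertices whose colors make every new cycle even, that is, arcs from color class $1$ to class $2$ with the return path of odd length. We then verify that the forced matching or factor arcs are unaffected.
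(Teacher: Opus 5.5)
Your overall framework is the right one: reduce from the unrestricted problems by attaching a gadget that makes $D'$ strong, plus NP membership. But you never produce a gadget that works, and the one concrete gadget you give fails.

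In (ii), nothing forces $a_vb_v$ and $c_vd_v$ into the matching. For $u\neq v$, $x$ can be matched to $b_v$ and $a_v$ to $v$, and $y$ to $c_u$ and $d_u$ to $u$. So a perfect matching of $D'$ may come from a perfect matching of $D-\{u,v\}$ rather than of $D$. Worse, even the intended configuration destroys the second property. If $xy$ and all $a_vb_v$, $c_vd_v$ lie in $A_1$, then $y$ and every $b_v$ and $d_v$ have in-degree $0$ in $D'-A_1$. A digraph containing an out-branching has at most one vertex of in-degree $0$. The fallback to a rooted version that Theorem~\ref{thm:13} ``likely'' provides is not carried out.

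In (i) and (iii), the hub $h$ adjacent to all of $V$ already breaks the first part. The vertex $h$ must be covered by a matching arc, or lie on a factor cycle, through vertices of $V$, so the first part no longer restricts to a perfect matching or cycle factor of $D$. For your alternatives you name the parity problem but do not solve it, and no assignment-independent $2$-colouring is actually exhibited.

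Write $(A_1',A_2')$ for the decomposition of $D'$ and $D_2'=(V',A_2')$. For (i) and (iii), the missing idea is that no parity control is needed. Arrange that in $D_2'$ every added arc leaves a source or enters a sink, so that no new cycle arises at all. The paper adds $s,t$ with arcs $sv$ and $vt$ for all $v\in V$ and a single return arc $ts$, which is then forced into $A_1'$.

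\begin{itemize}
\item[(iii)] Add $r$ with arcs $sr,rt$. Since $r$ has a unique in-arc and a unique out-arc, and $ts$ is the only arc leaving $t$, the $3$-cycle $srts$ must lie in the cycle factor. The rest of the factor is then a cycle factor of $D$. Conversely, $A_1\cup\{sr,rt,ts\}$ makes $s$ a source, $t$ a sink and $r$ isolated in $D_2'$.
\item[(i)] Add $u_1,u_2$ with arcs $su_1,u_1t,u_1u_2,u_2u_1$. As $u_2$ has only the neighbour $u_1$, we get $su_1,u_1t\in A_2'$. Then $ts\in A_2'$ would close the odd cycle $su_1ts$, so $ts\in A_1'$ and $s,t$ are matched to each other.
\item[(ii)] A different mechanism is used. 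The paper takes the $2$-cycle $st,ts$ with $s$ pendant, so $t$ is matched to $s$, together with arcs $v_it$, paths $ta_ib_iv_i$ and extra arcs $v_ib_i$.
  \begin{itemize}
  \item $a_ib_i\in A_1'$ is forced.
  \item $st\in A_1'$, since otherwise $s$ would be the root and reach no vertex of $V$.
  \item A directed path of $D_2'$ that leaves $V$ can never re-enter it, while the arcs $v_ib_i$ keep every $b_i$ reachable.
  \end{itemize}
  Hence the out-branching of $D_2'$ restricts to an out-branching of $(V,A_2'\cap A)$.
\end{itemize}
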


Theorem~\ref{thm:11} is proved by a reduction from $3$-SAT. The proofs of Theorems~\ref{thm:12} and~\ref{thm:13} use reductions from the Hamiltonian cycle problem for $2$-regular digraphs, which is known to be NP-complete~\cite{Ramanath1985}. The proof of Theorem~\ref{thm:14} uses a construction similar to that used in the proof of the  $(\mathrm{bipartite},\text{cycle factor})$-arc-decomposition problem in~\cite{BJBGPA2022}. The proof of claims (i)-(iii) of Theorem~\ref{thm:15} use reductions from the unrestricted version of the same arc-partition problem.

The rest of the paper is organized as follows. In Section~2, we recall the notation and preliminary results. In Section~3, we prove Theorem~\ref{thm:11}.  The proofs of Theorems~\ref{thm:12} and~\ref{thm:13} are presented in Section~4. Section~5 is devoted to the proof of Theorem~\ref{thm:15}.

\section{Preliminaries}

Notation not introduced here is consistent with~\cite{BangJensenGutin2009}. For an integer $i$, we use the notation $\boldsymbol{[i]=\{1,\ldots,i\}}$. The digraphs considered in this paper are always simple, i.e., without loops and multiple arcs. Pairs of oppositely directed arcs are allowed. The order and size of a digraph $D$ are denoted by $\boldsymbol{|D|}$ and $\boldsymbol{\|D\|}$, respectively. For two vertices $x,y\in V(D)$, we denote the arc from $x$ to $y$ by $xy$. For $A'\subseteq A(D)$, we write $\boldsymbol{D[A']=(V(D),A')}$ for the spanning subdigraph of $D$ with arc set $A'$. For $A'\subseteq A(D)$, we write $\boldsymbol{D-A'}$ for the digraph obtained from $D$ by deleting all arcs in $A'$.

For a vertex $x$ of $D$, its out-degree and in-degree are denoted by $\boldsymbol{d_D^+(x)}$ and $\boldsymbol{d_D^-(x)}$, respectively, and $\boldsymbol{d_D(x)=d_D^-(x)+d_D^+(x)}$. The maximum out-degree of $D$ is denoted by $\boldsymbol{\Delta^+(D)}$. The underlying graph $\boldsymbol{U(D)}$ of a digraph $D$ is obtained by replacing each arc of $D$ by an undirected edge and suppressing multiple edges. A digraph is strong if it contains an $(x,y)$-path for every ordered pair of distinct vertices $x,y$.

We shall repeatedly use the vertex-splitting construction. Given a digraph $D$, let $\boldsymbol{S(D)}$ be the digraph with
\[
{V(S(D))=\{v^-,v^+:v\in V(D)\}}
\]
and
\[
{A(S(D))
=\{v^-v^+:v\in V(D)\}\cup\{v^+u^-:vu\in A(D)\}.}
\]
Thus each vertex $v$ is replaced by the arc $v^-v^+$, and each arc $vu$ is replaced by $v^+u^-$. This is the standard {\bf vertex-splitting} procedure; see~\cite{BangJensenGutin2009}.

We shall use the following known result. Recall that a digraph is \textbf{$\boldsymbol{k}$-regular} if every vertex has in-degree and out-degree $k$.

\begin{theorem}[{\cite[Theorem~6.1.2]{BangJensenGutin2009}}]\label{thm:Ramanath}
It is NP-complete to decide whether a 2-regular digraph contains a Hamiltonian cycle.
\end{theorem}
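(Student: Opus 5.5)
The statement is a known result, quoted from \cite[Theorem~6.1.2]{BangJensenGutin2009}, so the plan is to reprove it by reducing from the Hamiltonian cycle problem in digraphs with bounded degrees. Membership in NP is clear. For hardness I would start from the result of Plesn\'ik, which I take as given rather than reprove: the Hamiltonian cycle problem is NP-complete for (planar) digraphs $D$ with $d^+_D(x),d^-_D(x)\le 2$ for every vertex $x$. The reduction then removes every vertex of in- or out-degree $1$ by a polynomial sequence of Hamiltonicity-preserving simplifications.

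\textbf{Step 1: normalise the instance.} If some vertex has in- or out-degree $0$, output a fixed non-Hamiltonian 2-regular digraph.

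\textbf{Step 2: delete arcs no Hamiltonian cycle can use.}
\begin{itemize}
\item If $d^+(u)=1$ with out-arc $uv$, then $uv$ lies on every Hamiltonian cycle. So every other arc entering $v$ can be deleted.
\item Symmetrically, if $d^-(v)=1$ with in-arc $uv$, every other arc leaving $u$ can be deleted.
\end{itemize}
It is convenient to view these rules in the vertex-split digraph $S(D)$. A Hamiltonian cycle of $D$ corresponds to a perfect matching, in the bipartite graph $B$ with arcs $v^+u^-$ for $vu\in A(D)$, that additionally closes up into one cycle. Since all degrees in $B$ are at most $2$, every component of $B$ is a path or an even cycle. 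A path component either has a unique perfect matching, which is forced, or has none, in which case we reject. Hence exhaustive application of the two deletion rules (plus rejection on degree $0$) leaves a digraph in which every arc is either \emph{forced} (both endpoints have degree $1$ in the relevant direction) or lies in an even-cycle component of $B$, where every vertex has degree $2$.

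\textbf{Step 3: contract forced arcs.} Contract each forced arc $uv$ into a single vertex $w$, which inherits the in-arcs of $u$ and the out-arcs of $v$. Hamiltonian cycles of the old and new digraphs correspond bijectively. Each contraction reduces the order, so the process ends after polynomially many rounds. The result is either a rejected instance or a digraph in which every vertex has in- and out-degree exactly $2$, that is, a 2-regular digraph. It is Hamiltonian if and only if the original $D$ was.

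\textbf{Main obstacle: keeping the digraph simple.} I expect the real work to be in preserving simplicity during contraction.
\begin{itemize}
\item \emph{Loops.} A loop arises when both $uv$ and $vu$ are forced. This gives a forced 2-cycle, so we reject unless $|D|=2$, which is a trivial instance.
\item \emph{Parallel arcs.} Two parallel arcs $wz$ can be merged, since a Hamiltonian cycle uses at most one of them. This lowers degrees, and we then return to Step 2.
\end{itemize}
One must check that these extra rules preserve the invariant that degrees are at most $2$, and that they still terminate. Since every rule strictly decreases $|V|+|A|$, termination is immediate. Finally, if the resulting 2-regular digraph is very small, we replace it by an equivalent fixed instance, so the output is a simple 2-regular digraph.
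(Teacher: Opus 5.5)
Your route differs from the paper's because the paper gives no proof of this statement. It imports it from Bang-Jensen and Gutin (Theorem~6.1.2), with the introduction also crediting Ramanath (1985). You instead give a self-contained polynomial reduction from Plesn\'{\i}k's theorem, which says that Hamiltonicity is NP-complete for digraphs with $d^+,d^-\le 2$, and you take that theorem as the black box. Your reduction has two parts. First, you resolve the path components of the bipartite representation $B$ into forced and forbidden arcs. Then you contract the forced arcs $uv$, meaning those with $d^+(u)=d^-(v)=1$. This contraction is exactly the inverse of the vertex-splitting $S(D)$ that the paper uses in Section~4. So your argument makes explicit that the $2$-regular problem is polynomially equivalent to the degree-bounded one; the citation is shorter but hides this.

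The reduction is sound apart from one slip in your case analysis. A loop at the contracted vertex arises whenever $vu\in A$, not only when $vu$ is also forced.
\begin{itemize}
\item If $vu$ is forced, then $\{u,v\}$ is an entire component. You are right to reject, unless the current order is $2$.
\item If $vu$ lies in an even-cycle component of $B$, you must not reject. After exhaustive Step~2 this means $d^+(v)=d^-(u)=2$, so the order is at least $3$. Instead, delete $vu$ (equivalently, the loop). It lies on no Hamiltonian cycle because $uv$ is forced. Then return to Step~2, since this deletion turns that cycle component of $B$ into a path.
\end{itemize}

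Your parallel-arc rule is never triggered. Contract $uv$ with $d^+(u)=d^-(v)=1$ in a simple digraph. The new vertex $w$ inherits in-arcs only from $u$ and out-arcs only from $v$, so no two arcs can join the same ordered pair. Hence the output is automatically a simple $2$-regular digraph.
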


\section{The (is a perfect matching, having no odd directed cycle)-arc-decomposition problem}

\noindent\textit{Proof of Theorem~\ref{thm:11}.}
We show how to reduce the 3-SAT problem to the ({is a }perfect matching, having no odd directed cycle)-arc-decomposition problem.

Let $F$ be a 3-SAT instance with variables $x_1,\ldots,x_n$ and clauses $C_1,\ldots,C_m$. The prescribed order of clauses naturally defines an order for all occurrences of each variable $x$ and its negation $\bar x$ within $F$. Define the variable gadget $W[u,v;p,q]$ with vertex set
\[
V(W)=\{u,v,y_1,\ldots,y_{2p},z_1,\ldots,z_{2q}\}
\]
and arc set
\[
A(W)=A(P_y)\cup A(P_z),
\]
where $P_y=uy_1\cdots y_{2p}v$ and $P_z=uz_1\cdots z_{2q}v$. If $p=0$ (respectively, $q=0$), the corresponding path is understood to be the single arc $uv$.

\begin{figure}[h]
    \centering
    \includegraphics[width=0.4\hsize]{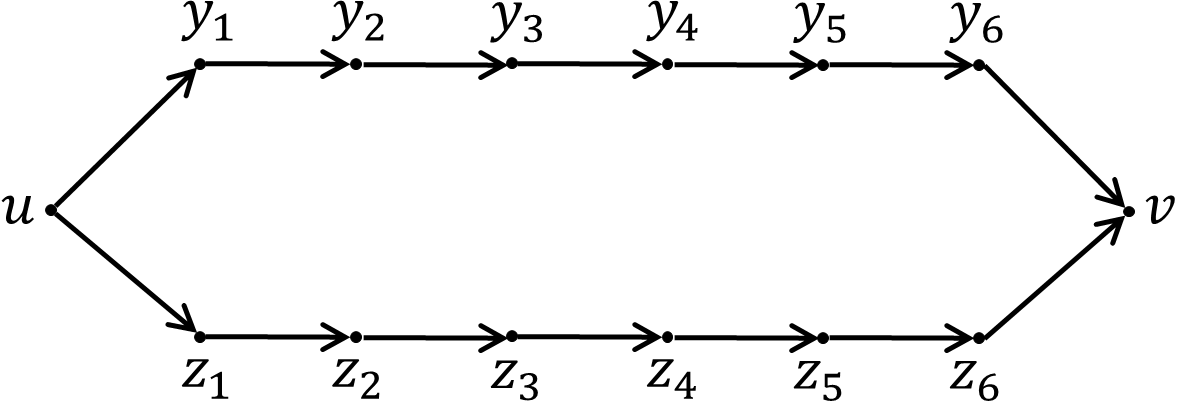}
    \caption{$W[u,v;3,3]$}
    \label{fig:5}
\end{figure}

 We next construct the clause gadget $R[r_1,r_2,s_1,s_2,t_1,t_2;e_1,e_2,e_3]$, where $r_1,r_2,s_1,s_2,t_1,t_2$ are vertices and $e_1=a_1b_1$, $e_2=a_2b_2$, $e_3=a_3b_3$ are arcs, disjoint from these vertices (see Figure \ref{fig:6}). The gadget $R$ has 12 vertices and the arc set consists of a directed 9-cycle $a_1b_1r_2a_2b_2s_2a_3b_3t_2a_1$ and three arcs $r_1r_2,s_1s_2,t_1t_2$.

\begin{figure}[h]
    \centering
    \includegraphics[width=0.26\hsize]{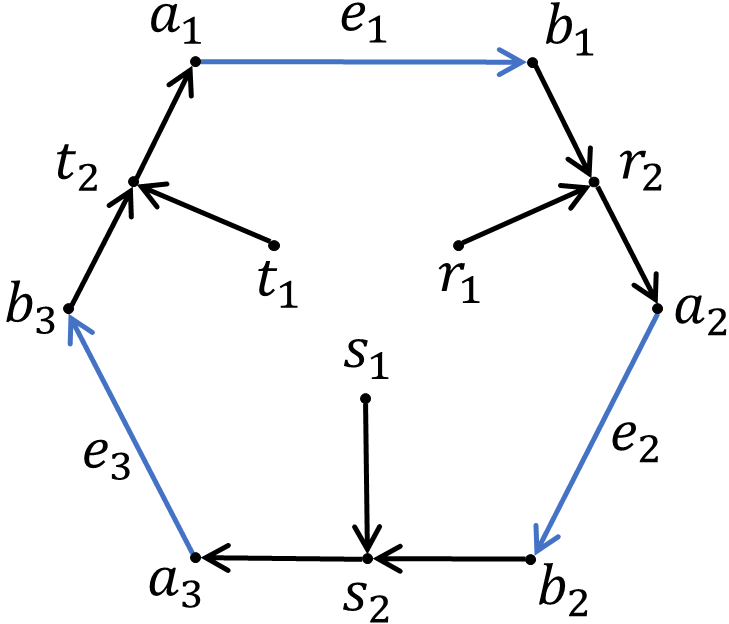}
    \caption{$R[r_1,r_2,s_1,s_2,t_1,t_2;e_1,e_2,e_3]$}
    \label{fig:6}
\end{figure}

We construct the digraph $D_F$ as follows:
\begin{enumerate}[label=(\arabic*)]
\item For each variable $x_i$, we assign a distinct copy $W[u_i,v_i;p_i,q_i]$ of the variable gadget, where $p_i$ (respectively, $q_i$) denotes the number of clauses in which $x_i$ (respectively, $\bar x_i$) occurs as a literal.
\item For each clause $C_j$, we assign one copy $R[r_{j,1},r_{j,2},s_{j,1},s_{j,2},t_{j,1},t_{j,2};e_{j,1},e_{j,2},e_{j,3}]$ of the clause gadget, where the six vertices $r_{j,1},r_{j,2},s_{j,1},s_{j,2},t_{j,1},t_{j,2}$ are private to this copy and the arcs $e_{j,1},e_{j,2},e_{j,3}$ are identified with arcs from the variable gadgets as follows. Suppose $C_j$ consists of literals from variables $x_i,x_k,x_\ell$, where each literal may be either a variable or its negation. If $x_i$ appears unnegated in $C_j$ and this is the $r$-th occurrence of $x_i$, then we identify $e_{j,1}$ with the arc $y_{i,2r-1}y_{i,2r}$. If $\bar x_i$ appears in $C_j$ and this is the $s$-th occurrence of $\bar x_i$, then we identify $e_{j,1}$ with the arc $z_{i,2s-1}z_{i,2s}$. The arcs $e_{j,2}$ and $e_{j,3}$ are identified analogously with arcs of clause gadgets.
\end{enumerate}

\begin{property}
Suppose $(A_1,A_2)$ is an arc-decomposition of $D_F$ such that $A_1$ forms a perfect matching and $A_2$ contains no odd directed cycle. Then the following properties hold:
\begin{enumerate}[label=\arabic*.]
\item The arcs $r_{j,1}r_{j,2},s_{j,1}s_{j,2},t_{j,1}t_{j,2}$ are in $A_1$ for every $j\in [m]$;
\item The arcs $b_{j,1}r_{j,2},r_{j,2}a_{j,2},b_{j,2}s_{j,2},s_{j,2}a_{j,3},b_{j,3}t_{j,2},t_{j,2}a_{j,1}$ all belong to $A_2$ for every $j\in [m]$;
\item Inside every variable gadget, the arcs of $A_1$ are formed by one of the two perfect matchings of the even cycle $U(W)$ formed by the two $(u,v)$-paths. Equivalently, when $p,q>0$, either
\[
uy_1,y_2y_3,\ldots,y_{2p-2}y_{2p-1},y_{2p}v,z_1z_2,\ldots,z_{2q-1}z_{2q}\in A_1,
\]
or
\[
uz_1,z_2z_3,\ldots,z_{2q-2}z_{2q-1},z_{2q}v,y_1y_2,\ldots,y_{2p-1}y_{2p}\in A_1,
\]
and all other arcs of the variable gadget belong to $A_2$. The same description is read in the natural way when $p=0$ or $q=0$.
\end{enumerate}
\end{property}

\begin{proof}
As $r_{j,1,}s_{j,1},t_{j,1}$ are of degree one and $A_1$ is a perfect matching, the arcs $r_{j,1}r_{j,2},s_{j,1}s_{j,2},t_{j,1}t_{j,2}$ belong to $A_1$. Therefore, the arcs $b_{j,1}r_{j,2},r_{j,2}a_{j,2},b_{j,2}s_{j,2},s_{j,2}a_{j,3},b_{j,3}t_{j,2},$ $t_{j,2}a_{j,1}$ all belong to $A_2$. Notice that every vertex inside each variable gadget $W$ has all its incident arcs going outside the gadget contained in $A_2$, because each such outside arc is incident with a copy of one of $r_2,s_2,t_2$, which is already matched in $A_1$ to the corresponding copy of $r_1,s_1,t_1$, respectively. Hence, the arcs of $A_1\cap A(W)$ form exactly a perfect matching of $V(W)$. Since the underlying graph of every variable gadget is an even cycle, it has exactly the two perfect matchings listed above. This proves the property.
\end{proof}

\begin{claim}
$F$ is satisfiable if and only if $D_F$ admits a ({is a }perfect matching, having no odd directed cycle)-arc-decomposition.
\end{claim}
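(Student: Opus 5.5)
We prove both directions using the Property just established. The key structural fact is that, for every decomposition of the form described in the Property, the only directed cycles that can survive in $D_F[A_2]$ are the clause $9$-cycles $a_{j,1}b_{j,1}r_{j,2}a_{j,2}b_{j,2}s_{j,2}a_{j,3}b_{j,3}t_{j,2}a_{j,1}$. Since these are odd, a decomposition exists exactly when every clause cycle contains an $A_1$-arc. By the Property, the only candidates for such an arc are the identified arcs $e_{j,1},e_{j,2},e_{j,3}$.

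First, fix the dictionary between truth values and the two matchings of a variable gadget $W[u_i,v_i;p_i,q_i]$. We let $x_i$ be \emph{true} exactly when $A_1$ contains the arcs $y_{i,1}y_{i,2},\ldots,y_{i,2p_i-1}y_{i,2p_i}$, and hence also $u_iz_{i,1},z_{i,2}z_{i,3},\ldots,z_{i,2q_i}v_i$. Otherwise $x_i$ is \emph{false}, and $A_1$ contains $z_{i,1}z_{i,2},\ldots$ and $u_iy_{i,1},\ldots,y_{i,2p_i}v_i$. Under this dictionary, for each occurrence of a literal in $C_j$ the identified arc $e_{j,k}$ lies in $A_1$ if and only if that literal is true.

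($\Leftarrow$) Suppose $(A_1,A_2)$ is a valid decomposition. By Property~3, each gadget uses one of its two matchings, which defines an assignment. The clause $9$-cycle of $C_j$ is an odd directed cycle, so it cannot lie entirely in $A_2$. By Property~2, its six arcs incident with $r_{j,2},s_{j,2},t_{j,2}$ are in $A_2$. Hence some $e_{j,k}$ is in $A_1$, so some literal of $C_j$ is true.

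($\Rightarrow$) Given a satisfying assignment, put into $A_1$ the arcs $r_{j,1}r_{j,2},s_{j,1}s_{j,2},t_{j,1}t_{j,2}$ for all $j$, together with the matching of each variable gadget chosen by the dictionary. Put all remaining arcs into $A_2$. This $A_1$ is a perfect matching: the $r,s,t$ vertices are covered by the first type of arcs, and each gadget's vertex set is covered by an even-cycle matching.

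The main work, and the step I expect to be the obstacle, is showing that $D_F[A_2]$ has no odd directed cycle. Directed cycles in $D_F$ can a priori wander between gadgets: after $b=y_{i,2r}$ one may continue along the gadget path to $y_{i,2r+1}$, which is the tail of another clause arc. I would handle this by tracing the out-arcs available in $A_2$, using four observations.
\begin{itemize}
\item $u_i$ is a source and $v_i$ is a sink, so no cycle uses them.
\item An odd-indexed path vertex $y_{i,2r-1}$ (an $a$-vertex) has the arc $y_{i,2r-1}y_{i,2r}$ as its only out-arc. On a true literal path this arc is in $A_1$, so in $A_2$ the vertex is a dead end.
\item An even-indexed vertex $y_{i,2r}$ (a $b$-vertex) on a false literal path has its path out-arc $y_{i,2r}y_{i,2r+1}$ in $A_1$. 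Its only $A_2$ out-arc therefore goes to the clause vertex $r_{j,2}$, $s_{j,2}$ or $t_{j,2}$.
\item Each of $r_{j,2},s_{j,2},t_{j,2}$ has a unique $A_2$ out-arc, going to the next $a$-vertex of the same clause cycle.
\end{itemize}
The same holds for the $z$-paths. Following any $A_2$-walk, once it leaves a true path or enters a clause vertex it is forced around the clause $9$-cycle of a single clause. Along that cycle it dies at the first true literal.

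Consequently every directed cycle of $D_F[A_2]$ is a full clause $9$-cycle all of whose literals are false. No such cycle exists, since the assignment is satisfying. Hence $D_F[A_2]$ is acyclic, and in particular it has no odd directed cycle.

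Finally, $D_F$ has polynomial size in $n+m$, and the problem is in NP, since a certificate is checked by testing that $A_1$ is a perfect matching and that $D_F[A_2]$ has no odd directed cycle, the latter being polynomial. Together with the Claim, this yields Theorem~\ref{thm:11}.
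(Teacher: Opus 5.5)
Your proposal is correct and follows the paper's proof closely: the same dictionary between truth values and the two matchings of each variable gadget, the same choice of $A_1$, and the same use of Property~1 to force some $e_{j,k}$ into $A_1$ in the converse direction. The one difference is cosmetic. To show $D_F[A_2]$ has no odd cycle, the paper notes that every non-clause $A_2$-arc of a variable gadget runs from an $A_2$-source to an $A_2$-sink, whereas you trace forced out-arcs until the walk dies at a true literal; both arguments leave the clause $9$-cycles as the only possible directed cycles, and a satisfying assignment breaks each of them.
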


\begin{proof}
Suppose $F$ is satisfiable, and let $\Phi$ be a satisfying truth assignment for $F$. For each variable $x_i$, if $x_i$ evaluates to true under $\Phi$, we include in $A_1$ the perfect matching of the underlying variable cycle containing the first edge of $P_z$; when $p_i,q_i>0$, these are the arcs $u_i z_{i,1}$, $z_{i,2}z_{i,3}$, $\ldots$, $z_{i,2q_i-2}z_{i,2q_i-1}$, $z_{i,2q_i}v_i$, $y_{i,1}y_{i,2}$, $\ldots$, $y_{i,2p_i-1}y_{i,2p_i}$. If $x_i$ evaluates to false under $\Phi$, we instead place into $A_1$ the other perfect matching, i.e., the one containing the first edge of $P_y$; when $p_i,q_i>0$, these are the arcs $u_i y_{i,1}$, $y_{i,2}y_{i,3}$, $\ldots$, $y_{i,2p_i-2}y_{i,2p_i-1}$, $y_{i,2p_i}v_i$, $z_{i,1}z_{i,2}$, $\ldots$, $z_{i,2q_i-1}z_{i,2q_i}$. We further add to $A_1$ the arcs $r_{j,1}r_{j,2}$, $s_{j,1}s_{j,2}$, $t_{j,1}t_{j,2}$ for every $j\in[m]$, and define $A_2$ to be the set of all remaining arcs. One can verify that every arc associated with a true literal lies in $A_1$.

Clearly, each vertex of $D_F$ is incident with exactly one arc within $A_1$. Consequently, $A_1$ forms a perfect matching.

We now show that $D_F[A_2]$ contains no odd directed cycle. In a variable gadget, each $A_2$-arc which is not a distinguished clause arc goes from a vertex of in-degree zero in $D_F[A_2]$ to a vertex of out-degree zero in $D_F[A_2]$, and hence cannot lie on a directed cycle. Thus every directed cycle of $D_F[A_2]$ must be contained in one clause gadget and, in fact, must be its directed 9-cycle. Since every clause has at least one true literal, at least one of $e_{j,1},e_{j,2},e_{j,3}$ lies in $A_1$, so that directed 9-cycle is broken. Therefore $D_F[A_2]$ has no odd directed cycle.

Conversely, suppose $D_F$ admits an arc-decomposition $(A_1,A_2)$ such that $A_1$ forms a perfect matching and $A_2$ contains no odd directed cycle. We define a truth assignment $\Phi$ as follows: set $x_i$ to true if the unique edge of $A_1$ incident with $u_i$ lies on $P_z$, and set $x_i$ to false if it lies on $P_y$. Property~1 shows that exactly one alternative occurs.

Consider an arbitrary clause $C_j$ of $F$. As noted earlier, all arcs of the 9-cycle in the corresponding clause gadget, except $e_{j,1},e_{j,2},e_{j,3}$, lie in $A_2$. Since the subdigraph induced by $A_2$ contains no odd directed cycle, at least one arc among $e_{j,1},e_{j,2},e_{j,3}$ must belong to $A_1$. Without loss of generality, assume that $e_{j,1}\in A_1$.

If the corresponding variable $x_i$ appears as a positive literal in $C_j$, then $e_{j,1}$ belongs to the path $P_y=u_i y_{i,1}\cdots y_{i,2p_i}v_i$. By Property~1, this forces the $A_1$-edge incident with $u_i$ to lie on $P_z$, so $x_i$ is true under $\Phi$. Likewise, if $x_i$ appears as a negative literal in $C_j$, then $e_{j,1}$ lies in the path $P_z=u_i z_{i,1}\cdots z_{i,2q_i}v_i$. It follows that the $A_1$-edge incident with $u_i$ lies on $P_y$, so $x_i$ is false under $\Phi$. Hence, $C_j$ contains a true literal. Since $C_j$ is chosen arbitrarily, every clause of $F$ is satisfied by the assignment $\Phi$.
\end{proof}

The construction is polynomial in $|F|$, and the problem belongs to NP because a perfect matching can be verified directly and the absence of an odd directed cycle can be tested in polynomial time by checking whether the underlying graph of each strong component is bipartite. Therefore the problem is NP-complete. \hfill$\square$

\section{Proofs of Theorems~\ref{thm:12} and~\ref{thm:13}}
\subsection{The (perfect matching, strong)-arc-decomposition problem}

\noindent\textit{Proof of Theorem~\ref{thm:12}.}
To show this, we reduce the Hamiltonian cycle problem for 2-regular digraphs to the (perfect matching, strong)-arc-decomposition problem. Let $D$ be a 2-regular digraph of order $n$ and let $D'$ be obtained from $D$ by the vertex-splitting procedure described in Section~2, that is, $V(D')=V'\cup V''$ and
\[
A(D')=\{v''u'\mid vu\in A(D)\}\cup\{v'v''\mid v\in V(D)\},
\]
where $V'$ and $V''$ are two copies of $V(D)$. As $D$ is 2-regular, we have $|D'|=2n$, $\|D'\|=3n$, and, for all $x\in V(D')$, we have $d(x)=3$.

\begin{figure}[h]
    \centering
    \includegraphics[width=0.32\hsize]{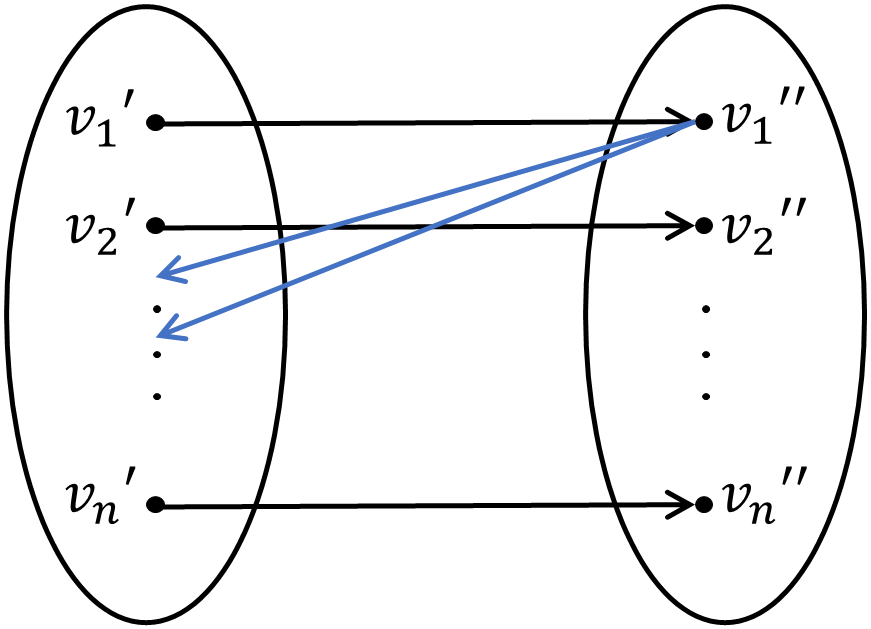}
    \caption{$D'$}
    \label{fig:1}
\end{figure}

\begin{claim}
$D$ has a Hamiltonian cycle if and only if $D'$ admits a $(\text{perfect matching}, \text{strong})$-arc-decomposition.
\end{claim}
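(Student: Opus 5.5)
Both directions rest on degree counting in $D'$. Every vertex $v'\in V'$ has in-degree $2$ (arcs $u''v'$ for the two in-neighbours $u$ of $v$ in $D$) and out-degree $1$ (the arc $v'v''$). Every vertex $v''\in V''$ has in-degree $1$ and out-degree $2$. Since $D'$ has $2n$ vertices and $3n$ arcs, a perfect matching $M$ has $n$ arcs, so the complement $A_2=A(D')\setminus M$ has $2n$ arcs. A strong spanning subdigraph on $2n$ vertices needs at least $2n$ arcs, with equality exactly when it is a Hamiltonian cycle. So the claim reduces to: $D'-M$ is a Hamiltonian cycle of $D'$ for some perfect matching $M$ if and only if $D$ is Hamiltonian.

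For the forward direction, let $C=v_1v_2\cdots v_nv_1$ be a Hamiltonian cycle of $D$. Then $C'=v_1'v_1''v_2'v_2''\cdots v_n'v_n''v_1'$ is a Hamiltonian cycle of $D'$. Set $A_1=A(D')\setminus A(C')$. Every vertex of $D'$ has total degree $3$ and is incident with exactly two arcs of $C'$, so it is incident with exactly one arc of $A_1$. Hence $A_1$ is a perfect matching, and $D'[A_2]=C'$ is strong.

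For the converse, let $(A_1,A_2)$ be a decomposition with $A_1$ a perfect matching and $D'[A_2]$ strong. Each $v'$ has out-degree $1$ in $D'$, and strongness of $D'[A_2]$ forces that out-arc $v'v''$ into $A_2$. Likewise each $v''$ has in-degree $1$ in $D'$, namely the same arc $v'v''$. So $A_1\subseteq\{v''u' : vu\in A(D)\}$.

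Now use the matching condition. Each $v''$ is incident with exactly one $A_1$-arc, which must be one of its two out-arcs, so in $A_2$ it has exactly one out-arc. Each $u'$ is incident with exactly one $A_1$-arc, one of its two in-arcs, so in $A_2$ it has exactly one in-arc. Together with the splitting arcs, $D'[A_2]$ is therefore $1$-regular, that is, a cycle factor. Being strong, it is a single Hamiltonian cycle of $D'$. This cycle alternates $v'v''$ and $v''u'$ arcs, so contracting each splitting arc $v'v''$ back to $v$ gives a Hamiltonian cycle of $D$.

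The only real step is checking that the splitting arcs are forced into $A_2$ and that the degree bookkeeping then gives a cycle factor; this is the step I would write out carefully, and it is routine. For NP-membership, a perfect matching and strong connectivity can both be verified in polynomial time. The reduction is polynomial by construction, so together with Theorem~\ref{thm:Ramanath} this gives NP-completeness.

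For the strengthening in Theorem~\ref{thm:12} to strongly connected inputs, I would restrict to strongly connected 2-regular $D$, since a Hamiltonian $D$ must be strong and strong connectivity can be checked in polynomial time. Then $D'$ is also strong, because splitting preserves strong connectivity.
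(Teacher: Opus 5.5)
Your proof is correct and follows essentially the same route as the paper. The forward direction is identical. In the converse, the paper derives $1$-regularity of $D'[A_2]$ directly from $|A_2|=2n$ together with strong connectivity, which is exactly the counting remark in your first paragraph; your additional local derivation (forcing the splitting arcs into $A_2$, then applying the matching condition) is therefore redundant but harmless.
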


\begin{proof}
Let $C=v_1v_2\cdots v_nv_1$ be a Hamiltonian cycle in $D$. By the construction of $D'$, it is straightforward to verify that the sequence
$C'=v'_1v''_1v'_2v''_2\cdots v'_nv''_nv'_1$ forms a Hamiltonian cycle in $D'$. Define $A_2=A(C')$ and $A_1=A(D')\setminus A_2$, and let $D_1=(V(D'),A_1)$ and $D_2=(V(D'),A_2)$. The digraph $D_2$ is strong, and every vertex $x\in V(D')$ satisfies $d_{D_2}(x)=2$ and $d_{D_1}(x)=1$. Therefore, $(A_1,A_2)$ yields an arc-decomposition of $D'$ into a perfect matching and a strong subdigraph.

Conversely, suppose $D'$ admits the desired arc-decomposition $(A_1,A_2)$. Clearly, $|A_1|=n$ and $|A_2|=3n-|A_1|=2n$. Since $D_2=(V(D'),A_2)$ is strong, every vertex has at least one in-arc and at least one out-arc in $D_2$. As $|A_2|=|V(D')|=2n$, it follows that every vertex has in-degree and out-degree exactly one in $D_2$. Hence $D_2$ is a cycle factor; since it is strong, it consists of a single directed cycle. Thus $A_2$ is the arc set of a Hamiltonian cycle of $D'$. By the bipartite form of the vertex-splitting construction, this Hamiltonian cycle alternates between $V'$ and $V''$, and the unique arc leaving each $v'\in V'$ is $v'v''$. Hence, after renaming the original vertices of $D$, we may write it as
$C'=v'_1v''_1v'_2v''_2\cdots v'_nv''_nv'_1$. It follows that $D$ contains the Hamiltonian cycle $C=v_1v_2\cdots v_nv_1$.
\end{proof}

As we can construct $D'$ from $D$ in linear time, the problem is NP-hard by Theorem~2.1. It also belongs to NP, since a proposed arc-decomposition can be checked in polynomial time: one verifies that $A_1$ is a perfect matching and that $(V(D'),A_2)$ is strong. Hence the (perfect matching, strong)-arc-decomposition problem is NP-complete. Since we can assume that the original digraph $D$ is strong, the reduction also creates a strong digraph $D'$. Therefore, the (perfect matching, strong)-arc-decomposition problem is NP-complete even when restricted to strong digraphs.\hfill$\square$

\subsection{The (perfect matching, having $B^+$)-arc-decomposition problem}

\noindent\textit{Proof of Theorem~\ref{thm:13}.}
We show how to reduce the Hamiltonian cycle problem for 2-regular digraphs to the (perfect matching, having $B^+$)-arc-decomposition problem.

Let $D$ be a 2-regular digraph of order $n$, and fix an arbitrary vertex $w\in V(D)$. Let $D''$ be obtained from $D$ by the vertex-splitting procedure and then let $D'=D''-\{w'w''\}$.
It follows that $|V(D')|=2n$ and $|A(D')|=3n-1$. For every vertex $v'\in V'\setminus\{w'\}$, we have $d^+_{D'}(v')=1$ and $d^-_{D'}(v')=2$. For every vertex $v''\in V''\setminus\{w''\}$, we have $d^+_{D'}(v'')=2$ and $d^-_{D'}(v'')=1$. In addition, $d^+_{D'}(w')=d^-_{D'}(w'')=0$ and $d^-_{D'}(w')=d^+_{D'}(w'')=2$.

\begin{figure}[h]
    \centering
    \includegraphics[width=0.32\hsize]{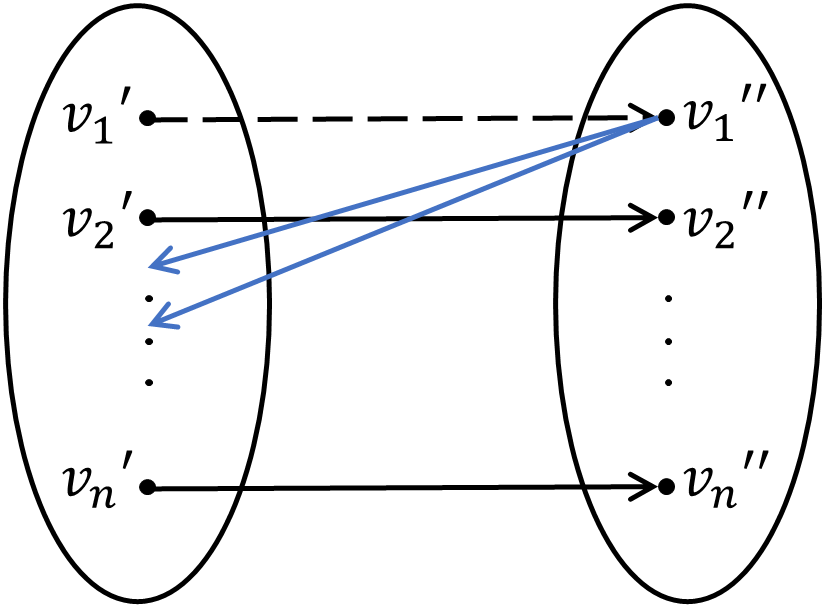}
    \caption{$D'$}
    \label{fig:2}
\end{figure}

\begin{claim}
The digraph $D$ contains a Hamiltonian cycle if and only if $D'$ admits a $(\text{perfect matching}, \text{having}$ $B^+)$-arc-decomposition.
\end{claim}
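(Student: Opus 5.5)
We prove both directions directly, mirroring the proof of the previous claim. The out-branching in $D'$ will necessarily be rooted at $w''$, because $d^-_{D'}(w'')=0$. The idea is that a Hamiltonian path from $w''$ to $w'$ in $D'$ plays the role of the Hamiltonian cycle.

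For the forward direction, let $C=v_1v_2\cdots v_nv_1$ be a Hamiltonian cycle of $D$ with $v_1=w$. Then
\[
P=w''v_2'v_2''v_3'v_3''\cdots v_n'v_n''w'
\]
is a Hamiltonian $(w'',w')$-path in $D'$. It uses the arcs $v_i'v_i''$ for $i\ge 2$ and the split images of the arcs of $C$, and it is an out-branching rooted at $w''$. Put $A_2=A(P)$, which has $2n-1$ arcs, and $A_1=A(D')\setminus A_2$, which has $n$ arcs. We check that $A_1$ is a perfect matching by degree counting.
\begin{itemize}
\item In $D'$, every vertex other than $w'$ and $w''$ has total degree $3$, and $P$ uses exactly two of its arcs, so exactly one arc remains in $A_1$.
\item The vertex $w''$ has total degree $2$ (its two out-arcs), and $P$ uses one of them, leaving one in $A_1$.
\item Symmetrically, $w'$ has total degree $2$ (its two in-arcs), and $P$ uses one, leaving one in $A_1$.
\end{itemize}
Hence $A_1$ covers every vertex exactly once.

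For the converse, suppose $(A_1,A_2)$ is a decomposition in which $A_1$ is a perfect matching and $D'[A_2]$ contains an out-branching $B$. Then $|A_1|=n$ and $|A_2|=2n-1=|V(D')|-1$, so $A_2=A(B)$ exactly. The root of $B$ must be $w''$, since every other vertex has an in-arc in $D'$ but $w''$ has none and must still be reached.

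The key step is to show that $B$ is a Hamiltonian $(w'',w')$-path.
\begin{itemize}
\item Every vertex $v'\ne w'$ has out-degree $1$ in $D'$, and its only out-arc is $v'v''$. This arc must lie in $B$, since otherwise $v''$ (for $v\ne w$) has its unique in-arc $v'v''$ outside $B$ and cannot be reached. So all arcs $v'v''$ with $v\ne w$ are in $A_2$.
\item Each vertex other than $w''$ has exactly one in-arc in $B$, and $B$ has exactly $2n-1$ arcs. Every $v''$ with $v\neq w$ has in-degree $1$ in $D'$ and out-degree $2$. It is covered by one $A_1$ arc, so at most one of its out-arcs lies in $A_1$.
\item Since $v'v''\in A_2$, the $A_1$-arc at $v''$ is an out-arc. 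So each such $v''$ has out-degree exactly $1$ in $B$.
\item $w''$ has out-degree $2$ in $D'$, and one of its out-arcs is in $A_1$ (its only incident arcs are out-arcs), so it has out-degree $1$ in $B$.
\item Each $v'$ with $v\neq w$ has out-degree $1$ in $B$, and $w'$ has out-degree $0$.
\end{itemize}
So every vertex has out-degree at most $1$ in the spanning out-tree $B$, which forces $B$ to be a Hamiltonian path from $w''$ to $w'$. Alternating through $V''$ and $V'$ and adding the missing arc $w'w''$ yields a Hamiltonian cycle of $D''$. Contracting each pair $v',v''$ back as in the previous claim gives a Hamiltonian cycle of $D$.

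The main obstacle is this degree bookkeeping, which shows that $B$ has maximum out-degree $1$. Membership in NP is routine.
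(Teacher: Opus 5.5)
Your proof is correct and follows essentially the same route as the paper: the count $|A_2|=2n-1$ forces $D'[A_2]$ to be the out-branching itself, rooted at $w''$, and degree bookkeeping then makes every vertex have out-degree at most one, so $A_2$ is a Hamiltonian $(w'',w')$-path. The paper's bookkeeping is a bit shorter (each vertex other than $w'$, $w''$ has total degree $2$ in $D'[A_2]$ and in-degree $1$ in the branching), whereas you locate the matching arc at each vertex individually, but both reach the same conclusion.
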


\begin{proof}
Suppose $C=v_1v_2\cdots v_nv_1$ is a Hamiltonian cycle of $D$, where $w=v_1$. Then
$P'=v''_1v'_2v''_2\cdots v'_nv''_nv'_1$
is a Hamiltonian path in $D'$. Define $A_2=A(P')$ and $A_1=A(D')\setminus A_2$, and set $D_1=(V(D'),A_1)$ and $D_2=(V(D'),A_2)$. Clearly, $P'$ is an out-branching rooted at $v''_1$ in $D_2$, and every vertex $x\in V(D')$ satisfies $d_{D_1}(x)=1$. Consequently, $(A_1,A_2)$ is the desired arc-decomposition of $D'$.

Conversely, let $(A_1,A_2)$ be a (perfect matching, having $B^+$)-arc-decomposition of $D'$. Then $|A_1|=n$ and
\[
|A_2|=(3n-1)-|A_1|=2n-1.
\]
Since $A_1$ is a perfect matching of $D'$, we obtain $d_{D_2}(w')=d_{D_2}(w'')=1$, and $d_{D_2}(v')=d_{D_2}(v'')=2$ for all $v\in V(D)\setminus\{w\}$. Now $D_2$ contains a spanning out-branching, which already has $2n-1$ arcs. Since $|A_2|=2n-1$, $D_2$ itself is that out-branching. Moreover, $w''$ has in-degree zero in $D'$, so it is the root, while $w'$ has out-degree zero in $D'$, so it is a leaf. Every other vertex has total degree two in $D_2$ and in-degree one in the out-branching, and therefore out-degree one. Hence $D_2$ is a Hamiltonian directed path from $w''$ to $w'$. Again let $v_1=w$; by the vertex-splitting construction, this Hamiltonian path can be written as
$P'=v''_1v'_2v''_2\cdots v'_nv''_nv'_1$.
This implies that all arcs $v_1v_2,v_2v_3,\ldots,v_nv_1$ exist in $D$. Therefore, $C=v_1v_2\cdots v_nv_1$ is a Hamiltonian cycle of $D$.
\end{proof}

As in the previous proof, we can construct the digraph $D'$ from $D$ in linear time. The problem belongs to NP, since both the perfect-matching condition and the existence of an out-branching can be checked in polynomial time. Hence, by Theorem~2.1, the (perfect matching, having $B^+$)-arc-decomposition problem is NP-complete. \hfill$\square$

\section{Proof of Theorem~\ref{thm:15}}
\subsection{The (is a perfect matching, having no odd directed cycle)-arc-decomposition problem restricted to strong digraphs}

\begin{proof}
We reduce from the unrestricted (is a perfect matching, having no odd directed cycle)-arc-partition problem. 
Let $D=(V,A)$ be an instance. We construct a digraph $D'=(V',A')$, where $V'=V\cup\{s,t,u_1,u_2\}$ and $A'=A\cup\{sv|v\in V\}\cup\{vt|v\in V\}\cup\{ts,su_1,u_1t,u_1u_2,u_2u_1\}$. A routine check confirms that $D'$ is strong.

\begin{claim}
$D$ admits a $(\text{is a perfect matching}, \text{having no odd directed cycle})$-arc-partition if and only if $D'$ admits a $(\text{is a perfect matching}, \text{having no odd directed cycle})$-arc-partition.
\end{claim}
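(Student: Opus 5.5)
The plan is to show that every (is a perfect matching, having no odd directed cycle)-arc-partition $(A_1',A_2')$ of $D'$ is forced on the four new vertices. The forcing should be: $A_1'$ contains exactly one of $u_1u_2,u_2u_1$, together with the arc $ts$, and no other new arc. Once that is established, restricting to $A$ gives a partition of $D$, and conversely any partition of $D$ extends in exactly this way. The two directions of the claim then reduce to checking which directed cycles of $D'[A_2']$ can pass through $s,t,u_1,u_2$.

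For the direction from $D'$ to $D$, let $(A_1',A_2')$ be such a partition of $D'$.
\begin{enumerate}
\item The only arcs incident with $u_2$ are $u_1u_2$ and $u_2u_1$. Since $A_1'$ is a perfect matching, one of these lies in $A_1'$. Hence $u_1$ is matched to $u_2$, and $su_1,u_1t\in A_2'$.
\item Next I show $ts\in A_1'$. If not, then $ts\in A_2'$, and together with $su_1,u_1t\in A_2'$ it forms the directed $3$-cycle $su_1ts$ in $A_2'$, which is odd. This contradicts the property of $A_2'$.
\item Consequently $s$ and $t$ are matched to each other, so every arc $sv$ and $vt$ with $v\in V$ lies in $A_2'$. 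Moreover $A_1:=A_1'\cap A$ is a perfect matching of $D$.
\item Set $A_2:=A\setminus A_1=A_2'\cap A$. Then $D[A_2]$ is a subdigraph of $D'[A_2']$, so it has no odd directed cycle, and $(A_1,A_2)$ is the required partition of $D$.
\end{enumerate}

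For the converse, start from a partition $(A_1,A_2)$ of $D$. Put $A_1'=A_1\cup\{ts,u_1u_2\}$, which is clearly a perfect matching of $D'$, and let $A_2'$ be the remaining arcs, namely
\[
A_2' = A_2\cup\{sv,vt: v\in V\}\cup\{su_1,u_1t,u_2u_1\}.
\]
I then check that every directed cycle of $D'[A_2']$ lies in $D[A_2]$.
\begin{itemize}
\item In $D'[A_2']$ the vertex $s$ has in-degree $0$, because its only in-arc $ts$ is in $A_1'$. So no cycle passes through $s$.
\item The vertex $t$ has out-degree $0$, since its only out-arc is $ts$. So no cycle passes through $t$.
\item The vertex $u_2$ has in-degree $0$, since $u_1u_2\in A_1'$. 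So no cycle passes through $u_2$.
\item Every cycle through $u_1$ would have to enter from $s$ or $u_2$, which is impossible by the previous points.
\end{itemize}
Hence all directed cycles lie in $D[A_2]$, and none of them is odd.

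The only step requiring real care is the forcing argument, specifically the observation that $ts$ must lie in $A_1'$. The triangle $su_1ts$ delivers this directly, which is exactly why the gadget $u_1,u_2$ is attached to both $s$ and $t$. Polynomiality of the construction and strong connectivity of $D'$ (every vertex lies on a cycle through $s$ and $t$) are routine.
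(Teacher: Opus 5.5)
Your proposal is correct and follows the paper's proof: you use the same extension $A_1'=A_1\cup\{ts,u_1u_2\}$, and you force $ts\in A_1'$ in the same way, via the pendant pair $u_1,u_2$ and the odd triangle $su_1ts$. Your vertex-by-vertex in/out-degree check for $s,t,u_2,u_1$ is simply a more explicit version of the paper's remark that $\{u_1,u_2\}$ has neighbourhood $\{s,t\}$.
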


\begin{proof}
Suppose $D$ admits an arc-partition $(A_1,A_2)$ such that $A_1$ forms a perfect matching and $A_2$ contains no odd directed cycle. We define $A_1'=A_1\cup\{ts,u_1u_2\}$ and $A_2'=A'\setminus A_1'$. Clearly, $A_1'$ forms a perfect matching of $D'$. Let $D_2'=(V',A_2')$. Since $d^-_{D_2'}(s)=d^+_{D_2'}(t)=0$, they cannot be contained in any directed cycle. Moreover, the neighbor set of $\{u_1, u_2\}$ in $D'$ is $\{s, t\}$, so they also cannot be in any directed cycle. Thus, any odd directed cycle in $D'$ can only pass through vertices in $V$. However, $D_2 = (V, A_2)$ has no odd directed cycle, and $D_2'[V] = D_2$, so $D_2'$ has no odd directed cycle. Therefore, $(A_1', A_2')$ is a (perfect matching, having no odd directed cycle)-arc-partition of $D'$.

Conversely, suppose $D$ admits an (perfect matching, having no odd directed cycle)-arc-partition $(A_1,A_2)$. Since $u_1$ is the only neighbor of $u_2$, we have $u_1u_2 \in A_1'$ or $u_2u_1 \in A_1'$, and consequently $su_1, u_1t \in A_2'$. If $ts \in A_2'$, then $D_2'$ would contain the odd cycle $su_1ts$, a contradiction. Hence $ts \in A_1'$, and therefore $A_1' \cap A$ forms a perfect matching of $D$. Since $D_2=(V,A_2' \cap A)$ is a subgraph of $D_2'=(V',A_2')$, $D_2$ also has no odd cycle. Thus $(A_1' \cap A,A_2' \cap A)$ is a (perfect matching, having no odd directed cycle)-arc-partition of $D$. 
\end{proof}

As we can construct $D'$ from $D$ in linear time, the problem is NP-hard by Theorem~\ref{thm:11}. Since a proposed arc-partition can be checked in polynomial time, it also belongs to NP. Hence the (is a perfect matching, having no odd directed cycle)-arc-partition problem is NP-complete even when restricted to strong digraphs.
\end{proof}

\subsection{The (perfect matching, having $B^+$)-arc-decomposition problem restricted to strong digraphs}

In the proof of Theorem~\ref{thm:13}, the digraph $D'$ obtained from the reduction has one vertex $v'_1$ with out-degree zero, and thus $D'$ is not strongly connected. Therefore, the proof of Theorem~\ref{thm:13} cannot directly yield the conclusion for the strongly connected case, so we provide the following additional argument.

\begin{proof}
We reduce from the unrestricted (perfect matching, having $B^+$)-arc-partition problem. 

Let $D=(V,A)$ be an instance where $V=\{v_1,v_2,\dots,v_n\}$. We construct a digraph $D'=(V',A')$, where $V'=V\cup\{s,t,a_1,\dots,a_n,b_1,\dots,b_n\}$ and $A'=A\cup\{st,ts\}\cup\{v_it|i\in [n]\}\cup\{ta_i|i\in [n]\}\cup\{a_ib_i|i\in [n]\}\cup\{b_iv_i|i\in [n]\}\cup\{v_ib_i|i\in [n]\}$. A routine check confirms that $D'$ is strong.

\begin{claim}
$D$ admits a $(\text{perfect matching}, \text{having}$ $B^+)$-arc-partition if and only if $D'$ admits a $(\text{perfect matching}, \text{having}$ $B^+)$-arc-partition.
\end{claim}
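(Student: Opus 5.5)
The plan is to prove both directions directly. The forward direction extends a given decomposition of $D$ explicitly. The converse first uses degree constraints to pin down most of the matching in $D'$, and then a reachability argument to locate the root.

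\emph{($\Rightarrow$)} Let $(A_1,A_2)$ be a (perfect matching, having $B^+$)-arc-partition of $D$. Let $B\subseteq A_2$ be an out-branching with root $r\in V$. Set $A_1'=A_1\cup\{st\}\cup\{a_ib_i\mid i\in[n]\}$ and $A_2'=A'\setminus A_1'$.
\begin{itemize}
\item $A_1'$ is a perfect matching of $D'$: $A_1$ covers $V$, the arc $st$ covers $s$ and $t$, and each $a_ib_i$ covers $a_i$ and $b_i$.
\item $D'[A_2']$ has an out-branching rooted at $r$. Indeed, $A_2'$ contains $B$, which reaches all of $V$, and also contains the arcs $v_1t$, $ts$, all $ta_i$ and all $v_ib_i$. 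Appending $v_1t$, $ts$, the arcs $ta_i$ and the arcs $v_ib_i$ to $B$ gives a spanning out-tree of $D'$.
\end{itemize}

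\emph{($\Leftarrow$)} Let $(A_1',A_2')$ be a (perfect matching, having $B^+$)-arc-partition of $D'$.

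First we determine the matching outside $V$.
\begin{itemize}
\item The only arcs at $s$ are $st$ and $ts$, so exactly one of them lies in $A_1'$, and $t$ is matched to $s$.
\item Consequently $ta_i\notin A_1'$ for every $i$. The only other arc at $a_i$ is $a_ib_i$, so $a_ib_i\in A_1'$.
\item Hence none of the arcs $v_it$, $b_iv_i$, $v_ib_i$, $ta_i$ lies in $A_1'$. Therefore $A_1=A_1'\cap A$ is a perfect matching of $D$, and all these connecting arcs belong to $A_2'$.
\end{itemize}
Let $A_2=A_2'\cap A$.

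The main step is to show that $D[A_2]$ has an out-branching. Let $B'$ be an out-branching of $D'[A_2']$ with root $r$. We use three observations about $D'[A_2']$.
\begin{itemize}
\item The only arcs entering $V$ from $V'\setminus V$ are the arcs $b_iv_i$.
\item The only in-arc of $b_i$ in $A_2'$ is $v_ib_i$, because $a_ib_i\in A_1'$.
\item From $t$ or $s$ one can only reach $t$, $s$ and the vertices $a_i$, and each $a_i$ has out-degree zero in $D'[A_2']$.
\end{itemize}

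From these, $r\notin V$ is impossible. If $r\in\{s,t\}\cup\{a_i\}$, then no vertex of $V$ is reachable from $r$. If $r=b_i$, then $b_i$ can only be the root, since it has no in-arc in a branching rooted at itself. It reaches $v_i$, but the in-arc $v_ib_i$ of $b_i$ is then unused, and every other vertex $v_j$ must be reached. A path from $b_i$ to $v_j$ can only leave $V$ through $t$, which gives a dead end, or through some $b_k$, from which it must return to $v_k$. I expect to need a small argument here to rule out reaching the other vertices this way.

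The cleanest uniform argument is: any directed path in $D'[A_2']$ whose endpoints are both in $V$ stays inside $V$. Leaving $V$ via $t$ never returns to $V$. Leaving $V$ via $v_kb_k$ forces the next arc to be $b_kv_k$, which revisits $v_k$ and contradicts the path being a path.

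If $r\in V$, this immediately shows that every $v\in V$ is reachable from $r$ within $D[A_2]$, so $D[A_2]$ has an out-branching rooted at $r$.

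If $r=b_i$, then every $v_j$ is reached by a path through $v_i$, since $v_i$ is the only entry from $b_i$ into $V$. The subpath from $v_i$ to $v_j$ stays in $V$, so $D[A_2]$ has an out-branching rooted at $v_i$.

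Hence $(A_1,A_2)$ is the desired arc-partition of $D$. Together with the linear-time construction of $D'$, membership in NP, and Theorem~\ref{thm:13}, this completes the reduction. The delicate point is the root-location and path-confinement argument just described.
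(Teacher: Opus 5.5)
Your proof is correct and follows essentially the same route as the paper. It uses the same matching $A_1'=A_1\cup\{st\}\cup\{a_ib_i\mid i\in[n]\}$ forward, and in the converse the same degree argument forcing $t$ to be matched to $s$ and $a_ib_i\in A_1'$, then the same path-confinement argument (exiting $V$ via $t$ is a dead end, exiting via $v_kb_k$ forces a revisit of $v_k$); the only difference is that you never need the paper's separate proof that $st\in A_1'$, since your reachability observation from $\{s,t\}$ rules out a root outside $V\cup\{b_1,\dots,b_n\}$ in either case.
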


\begin{proof}
Suppose $D$ admits an arc-partition $(A_1,A_2)$ such that $A_1$ forms a perfect matching and $A_2$ contains an out-branching $B^{+}$. Let $A_1'=A_1\cup\{st\}\cup\{a_ib_i|i\in [n]\}$ and $A_2'=A'\setminus A_1'$. Clearly, $A_1'$ forms a perfect matching of $D'$. Relabelling the vertices if necessary, we may assume that  $v_1$ is the root of $B^+$. Then $B^{+}$ extends to an out-branching of $D_2'=(V',A_2')$ by adding the arcs from $\{v_1t,ts\}\cup\{ta_i|i\in [n]\}\cup\{v_ib_i|i\in [n]\}$. Thus $D'$ admits a (perfect matching, having $B^+$)-arc-partition.

Conversely, suppose that $D'$ admits a (perfect matching, having $B^+$)-arc-partition $(A_1',A_2')$. Let $B^{+}$ be an out-branching contained in $(V',A_2')$. Since $s$ has $t$ as its unique neighbour in $D'$, exactly one of $st$ and $ts$ belongs to $A_1'$. In particular, $t$ is already matched to $s$, so $ta_i\in A_2$ for every $i\in[n]$. As $a_i$ has no neighbour other than $t$ and $b_i$, it follows that $\{a_ib_i|i\in [n]\}\subset A_1'$. We claim that $st\in A_1'$. Suppose otherwise. Then $ts\in A_1'$ and $st\in A_2'$. Since $ts$ is the only arc entering $s$, the vertex $s$ has in-degree zero in $D_2'=(V',A_2')$ and must therefore be the root of $B^{+}$. Notice that there is no arc from $\{s,t,a_1,a_2,\dots,a_n\}$ to $V\cup \{b_1,b_2,\dots,b_n\}$ in $A'_2$. Thus no vertex of $V$ is reachable from $s$, a contradiction. Hence $st\in A_1'$ and $ts\in A_2'$. 
The vertices $s,t,a_1,a_2,\dots,a_n,b_1,b_2,\dots,b_n$ are now all matched among themselves. Consequently, $A_1'\cap A$ is a perfect matching of $D$. It remains to show that $D_2=(V,A_2'\cap A)$ contains an out-branching. 
The root of $B^{+}$ cannot be $s$ or any $a_i$, since these vertices have out-degree zero in $D_2'$. It cannot be $t$ either, since from $t$ one can reach only $s$ and the vertices $a_i$, and hence no vertex of $V$. Thus the root is either $v_k$ or $b_k$ for some $k\in[n]$. In the latter case, $b_kv_k$ is the unique arc leaving $b_k$, so every root-to-$V$ path first enters $V$ at $v_k$. We next observe that, after reaching $v_k$, a directed path in $B^{+}$ that ends in $V$ cannot leave $V$. Indeed, if it uses an arc $v_jt$, then from $t$ it can proceed only to $s$ or to some $a_i$, from which no vertex of $V$ is reachable. If it uses an arc $v_jb_j$, then the only arc leaving $b_j$ is $b_jv_j$, which would repeat $v_j$ and therefore cannot occur on a directed path. It follows that the path from $v_k$ to $v_i$ in $B^{+}$ is entirely contained in $D_2'[V]$ for every $i\in[n]$. Hence $B^{+}[V]$ is a out-branching of $D_2$ rooted at $v_k$. Therefore, $(A_1'\cap A,A_2'\cap A)$ is a (perfect matching, having $B^+$)-arc-partition of $D$.

\end{proof}

As we can construct $D'$ from $D$ in linear time, the problem is NP-hard by Theorem~\ref{thm:13}. Since a proposed arc-partition can be checked in polynomial time, it also belongs to NP. Hence the (perfect matching, having $B^+$)-arc-partition problem is NP-complete even when restricted to strong digraphs.

\end{proof}

\subsection{The (is a cycle factor, having no odd directed cycle)-arc-decomposition problem restricted to strong digraphs}

\begin{proof}
We reduce from the unrestricted (is a cycle factor, having no odd directed cycle)-arc-partition problem. Let $D=(V,A)$ be an instance. We construct a digraph $D'=(V',A')$, where $V'=V\cup\{s,t,r\}$ and $A'=A\cup\{sr,rt,ts\}\cup\{sv|v\in V\}\cup\{vt|v\in V\}$. A routine check confirms that $D'$ is strong.

\begin{claim}
$D$ admits a $(\text{is a cycle factor}, \text{having no odd directed cycle})$-arc-partition if and only if $D'$ admits a $(\text{is a cycle factor}, \text{having no odd directed cycle})$-arc-partition.
\end{claim}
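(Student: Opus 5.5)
Both directions follow from the local structure of $D'$ around the three new vertices $s,t,r$. Note first that $r$ has exactly one in-arc $sr$ and one out-arc $rt$. The in-arcs of $s$ are $ts$ only, since no arc of $D$ or of the construction enters $s$ otherwise. The out-arcs of $t$ are $ts$ only.

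\emph{Forward direction.} Suppose $(A_1,A_2)$ is an arc-partition of $D$ with $D[A_1]$ a cycle factor and $D[A_2]$ having no odd directed cycle. Set $A_1'=A_1\cup\{sr,rt,ts\}$ and $A_2'=A'\setminus A_1'=A_2\cup\{sv,vt\mid v\in V\}$. Then $A_1'$ is a cycle factor of $D'$: it consists of the cycle factor of $D$ together with the directed triangle $srts$. In $D_2'=(V',A_2')$ the vertex $s$ has in-degree $0$ and $t$ has out-degree $0$, and $r$ is isolated. Hence no directed cycle of $D_2'$ meets $\{s,t,r\}$, so every directed cycle of $D_2'$ lies in $D_2'[V]=D[A_2]$, which has none of odd length.

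\emph{Converse.} Let $(A_1',A_2')$ be a valid partition of $D'$, so $D'[A_1']$ is a cycle factor. Since $r$ must have in- and out-degree one in $A_1'$, both $sr$ and $rt\in A_1'$. Since $ts$ is the only arc entering $s$, also $ts\in A_1'$. Thus $srts$ is a cycle of the factor, and $s$ and $t$ have no further $A_1'$-arcs. Hence every arc $sv$ and $vt$ lies in $A_2'$. Consequently every $v\in V$ has its unique in- and out-arc of $A_1'$ inside $A$, so $A_1'\cap A$ is a cycle factor of $D$. Finally, $D[A_2'\cap A]$ is a subdigraph of $D_2'$ and so has no odd directed cycle.

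Strongness of $D'$ is immediate: every $v\in V$ is reachable from $s$ via $sv$ and reaches $t$ via $vt$, and $ts$ and $sr,rt$ close everything up. The construction is linear and membership in NP is as before, so NP-hardness transfers from Theorem~\ref{thm:14}.

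The only delicate point is making sure the forced triangle $srts$ really absorbs all cycle-factor degrees at $s$ and $t$. This holds because $d^-_{D'}(s)=d^+_{D'}(t)=1$, and beyond that no real obstacle is expected.
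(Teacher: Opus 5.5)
Your proof is correct and follows the paper's argument essentially step for step: the same partition $A_1'=A_1\cup\{sr,rt,ts\}$ in the forward direction, and in the converse the same forcing of $sr,rt$ by the degrees of $r$ and of $ts$ by a degree-one condition. The only difference is that you use $d^-_{D'}(s)=1$ where the paper uses $d^+_{D'}(t)=1$, and either suffices; your remark that all arcs $sv,vt$ then lie in $A_2'$ just spells out why $A_1'\cap A$ is a cycle factor of $D$, which the paper leaves implicit.
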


\begin{proof}
Suppose first that $(A_1,A_2)$ is a (is a cycle factor, having no odd directed cycle)-arc-partition of $D$. Let $A_1'=A_1\cup\{sr,rt,ts\}$ and $A_2'=A'\setminus A_1'$. Since $D_1'=(V',A_1')$ consists of the cycle factor $(V,A_1)$ together with the directed $3$-cycle $srts$, $D_1'$ is a cycle factor. Now we prove that $D_2' = (V',A_2')$ contains no odd directed cycle. Notice that $d^-_{D_2'}(s)=d^+_{D_2'}(t)=d_{D_2'}(r)=0$. Therefore, every directed cycle of $D_2'$ is contained in $D_2'[V]=(V,A_2)$. By the assumption, $D_2'$ contains no odd directed cycle.

Conversely, suppose that $(A_1',A_2')$ is a (is a cycle factor, having no odd directed cycle)-arc-partition of $D'$. Since $r$ has $sr$ as its unique entering arc and $rt$ as its unique leaving arc, both $sr$ and $rt$ belong to the cycle factor $A_1'$. Moreover, $ts$ is the unique arc leaving $t$, and therefore $ts\in A_1'$. Thus $srts$ is one cycle of the cycle factor. Removing this cycle leaves a cycle factor of $D$, namely $(V,A_1'\cap A)$. Notice that $(V,A_2'\cap A)$ is a subdigraph of the odd-cycle-free digraph $(V',A_2')$, and hence is odd-cycle-free. Therefore, $(A_1'\cap A,A_2'\cap A)$ is a (is a cycle factor, having no odd directed cycle)-arc-partition of $D$.

\end{proof}

As we can construct $D'$ from $D$ in linear time, the problem is NP-hard by Theorem~\ref{thm:14}. Such a proposed arc-partition can be checked in polynomial time, it also belongs to NP. Hence the (is a cycle factor, having no odd directed cycle)-arc-partition problem is NP-complete even when restricted to strong digraphs.

\end{proof}

\bibliographystyle{plain} 
\bibliography{ref}

\end{document}